\documentclass[smallcondensed]{svjour3}       
\smartqed  

\usepackage{latexsym,enumerate,verbatim,cite,lscape,algorithm,algorithmic}
\usepackage{amssymb,amsfonts,amsmath}
\usepackage{microtype}
\usepackage{color} 
\usepackage[active]{srcltx}
\usepackage{rotating}
\usepackage[framemethod=tikz]{mdframed}
\mdfsetup{%
	skipbelow=4pt,
	skipabove=4pt,
	linewidth=1.25pt,
	backgroundcolor=green!11,
	userdefinedwidth=\textwidth,
	roundcorner=10pt,
}
\makeatletter
\newcommand\footnoteref[1]{\protected@xdef\@thefnmark{\ref{#1}}\@footnotemark}
\makeatother%
%
%
%

\def\R{\mathbb{R}}
\def\Argmin{\mathop{\rm Argmin}}

\begin{document}

\title{A difference-of-convex approach for split feasibility with applications to matrix factorizations and outlier detection}


\author{Chen Chen
\and Ting Kei Pong
\thanks{Ting Kei Pong's research was supported partly by Hong Kong Research Grants Council PolyU153085/16p.}
\and Lulin Tan
\thanks{Lulin Tan's research was supported partly by Natural Science Foundation of China (No. 11601162) and Natural Science Foundation of Guangdong Province, China (No. 2017A030310167).}
\and Liaoyuan Zeng
}
\institute{
Chen Chen \at School of Mathematical Sciences, South China Normal University, Guangzhou 510631, China\\
\email{chenchen@m.scnu.edu.cn}
\and
Ting Kei Pong \at
Department of Applied Mathematics, The Hong Kong Polytechnic University, Hong Kong, People's Republic of China\\
\email{tk.pong@polyu.edu.hk}
\and
Lulin Tan \at
School of Mathematical Sciences, South China Normal University, Guangzhou 510631, China\\
\email{lulin\_9@hotmail.com}
\and
Liaoyuan Zeng \at
Department of Applied Mathematics, The Hong Kong Polytechnic University, Hong Kong, People's Republic of China\\
\email{lyzeng@polyu.edu.hk}
}

\date{November 4, 2020}

\maketitle

\begin{abstract}
  The split feasibility problem is to find an element in the intersection of a closed set $C$ and the linear preimage of another closed set $D$, assuming the projections onto $C$ and $D$ are easy to compute. This class of problems arises naturally in many contemporary applications such as compressed sensing. While the sets $C$ and $D$ are typically assumed to be convex in the literature, in this paper, we allow both sets to be possibly nonconvex. We observe that, in this setting, the split feasibility problem can be formulated as an optimization problem with a difference-of-convex objective so that standard majorization-minimization type algorithms can be applied. Here we focus on the nonmonotone proximal gradient algorithm with majorization studied in \cite[Appendix~A]{LiuPongTake2017}. We show that, when this algorithm is applied to a split feasibility problem, the sequence generated clusters at a stationary point of the problem under mild assumptions. We also study local convergence property of the sequence under suitable assumptions on the closed sets involved. Finally, we perform numerical experiments to illustrate the efficiency of our approach on solving split feasibility problems that arise in completely positive matrix factorization, (uniformly) sparse matrix factorization, and outlier detection.
\end{abstract}

\section{Introduction}

The split feasibility problem aims at finding an element common to a closed set $C$ and the linear preimage of another closed set $D$, under the assumption that the projections onto $C$ and $D$ can be computed efficiently. This latter assumption is satisfied by a large class of closed convex sets (to which the projection is unique) including some simple polyhedral sets, and many widely used nonconvex sets such as the set of $s$-sparse vectors (see, for example, \cite[Proposition~3.1]{LuZhan2013}), the simplex with additional cardinality constraints \cite{KyBeCeKo13}, the set of orthogonal matrices \cite[Proposition 7]{AbsilMalick2012} and the set of matrices of rank at most $r$ \cite{EckYoung36}, etc. The split feasibility problem was first introduced in \cite{CensElfv1994}, and has found various applications, such as compressed sensing, signal processing,
image reconstruction and intensity modulated therapy; see, for example, \cite{Byrne2002,CensElfvKopfBort2005,LoMaWaXu12,XuChiYangLang2018} and references therein.

Although the split feasibility problem can be seen as a special case of the classical feasibility problem that finds a point in the intersection of two closed sets, a direct application of algorithms for feasibility problems such as the alternating projection method and the Douglas-Rachford splitting method may not be desirable. This is because, in a split feasibility problem, we only assume that the projections onto $C$ and $D$ are easy to compute; in particular, it can be difficult to project onto the linear preimage of $D$, rendering a direct application of classical methods for feasibility problems inefficient.
Specialized algorithms have thus been proposed for solving split feasibility problems, using only projections onto $C$ and $D$ as well as applications of the linear map and its adjoint.
However, most existing work on split feasibility problems focuses on the convex settings, i.e., they assume $C$ and $D$ are also convex; see, for example \cite{QuXiu2005, Byrne2002, ZhaoYang2005, CensElfvKopfBort2005,CensMotoKopfSega2007,CensElfv1994,LoMaWaXu12,ShehIyio2017,WangYangYang2011,XuChiYangLang2018,Yang2004}. This does not cover contemporary applications that involve nonconvex constraints.

In this paper, we consider the split feasibility problem in a possibly nonconvex setting, i.e., we allow the sets $C$ and $D$ to be possibly nonconvex. We propose an algorithm for solving it and analyze its global and local convergence properties. The algorithm we propose can be viewed as a generalization of the classical CQ algorithm, which was proposed in \cite{Byrne2002} for convex split feasibility problems. Indeed, as we will discuss in more detail in Section~\ref{sec3}, the split feasibility problem can be reformulated into a special possibly nonconvex optimization problem with a difference-of-convex objective function so that standard majorization-minimization type algorithms can be employed. Our main algorithm is an adaptation of the majorization-minimization type algorithm proposed in \cite[Appendix~A]{LiuPongTake2017} to solve this special optimization problem. When $C$ and $D$ are both convex and a constant stepsize strategy is adopted, our proposed algorithm reduces to the classical CQ algorithm.

Since we are solving the split feasibility problem via solving a nonconvex optimization problem, one cannot expect to obtain a global minimizer in general. Instead, we define a new concept of stationary point for split feasibility problem in Definition~\ref{def:stat} below and show that, under mild assumptions, any cluster point of the sequence generated by our algorithm is a {\em stationary point} of the split feasibility problem. The whole sequence generated is further shown to be convergent under additional assumptions such as the Kurdyka-{\L}ojasiewicz property \cite{AttoBolt2009,AttoBoltRedoSoub2010,AttoBoltSvai2013} and Lipschitz differentiability at the limit point. Furthermore, we also analyze local convergence rate, based on the Kurdyka-{\L}ojasiewicz exponent \cite{AttoBolt2009,AttoBoltRedoSoub2010,LiPong2018} and a generalization of the concept of linearly regular intersection: the concept of linearly regular intersection was proposed in \cite{LewiLukeMali2009} for studying local convergence rate of the alternating projection method for feasibility problems.

Finally, we perform numerical experiments to illustrate the efficiency of our method for solving split feasibility problems. Specifically, we perform numerical experiments on the completely positive matrix factorization problem, the (uniformly) sparse matrix factorization problem and an outlier detection problem. In particular, for the completely positive matrix factorization problem, we follow the approach in \cite{GroeDur2018} to reformulate the completely positive matrix factorization problem into a nonconvex split feasibility problem. Our numerical results show that our method always outperforms \cite[Algorithm 2]{GroeDur2018} in terms of both CPU time and solution quality.

The rest of the paper is organized as follows. In Section~\ref{sec2}, we introduce notation and some
preliminary results. The mathematical formulation of the split feasibility problem and our main algorithm for solving it are described in Section~\ref{sec3}. In Section~\ref{sec4}, we study subsequential convergence of the sequence generated by our algorithm. The global (sequential) convergence and the local
convergence rate of the sequence generated by our algorithm are studied in Section~\ref{sec5}. Finally, in Sections~\ref{sec 6.1}, \ref{sec7} and \ref{sec8}, we discuss how our algorithm can be applied
to solving the completely positive matrix factorization problem, the (uniformly) sparse matrix factorization problem and an outlier detection problem, respectively, and perform numerical experiments to study the performance of our algorithm.

\section{Notation and preliminaries}\label{sec2}

In this paper, we let ${\mathbb R}^n$ denote the $n$-dimensional Euclidean space. For a vector $x\in \R^n$, we denote its Euclidean norm, $\ell_1$ norm and $\ell_\infty$ norm by $\|x\|$, $\|x\|_1$ and $\|x\|_\infty$ respectively. We also let $B(x,r)$ denote the closed ball centered at $x$ with radius $r$, i.e., $B(x,r) = \{u\in \R^n:\; \|u - x\|\le r\}$.

An extended real valued function $f:\R^n\to (-\infty,\infty]$ is said to be proper if ${\rm dom}f:=\{x\in \R^n:\; f(x) <\infty\}\neq\emptyset$. Such a function is said to be closed if it is lower semicontinuous.
For a proper closed function $f$, the regular subdifferential and the (limiting) subdifferential of $f$ at an $\bar x\in {\rm dom}f$ are respectively defined as \cite[Definition 8.3]{RockWets1998}
\[
{\hat\partial} f(\bar x):=\left\{v\in {\mathbb R}^n:\; \liminf_{x\to\bar x,x\neq \bar x}\frac{f(x)-f(\bar x)-\langle v,x-\bar x\rangle}{\Vert x-\bar x\Vert}\geq 0\right\},
\]
 and
\[
\partial f(\bar x):=\{v\in \R^n:\;\exists x^t\stackrel{f}{\longrightarrow}\bar x~{\rm and }~v^t\to v~{\rm with}~v^t\in {\hat\partial} f(x^t)~\mbox{for each }t\},
\]
where $x^t\stackrel{f}{\longrightarrow}\bar x$ means both $f(x^t)\to f(\bar x)$ and $x^t\to\bar x$. By convention, we also set ${\hat\partial}f(x)=\partial f(x)=\emptyset$ if $ x\notin {\rm dom}f$. We let ${\rm dom}\partial f$ denote the domain of subdifferential, which
is defined as ${\rm dom}\partial f:=\{x\in \R^n:~\partial f(x)\neq \emptyset\}$. It is known in \cite[Exercise~8.8]{RockWets1998} that if $f$ is continuously differentiable at $x\in \R^n$, then $\partial f(x) = \{\nabla f(x)\}$.
In addition, if $f$ is proper convex, then $\partial f$ coincides with the notion of subdifferential in convex analysis; see \cite[Proposition~8.12]{RockWets1998}.

For a nonempty closed set $C\subseteq {\mathbb R}^n$, we let $C^{\infty}$ denote the horizon cone of $C$, which is defined in \cite[Definition 3.3]{RockWets1998} as
\[
C^{\infty}:=\{x\in \R^n:~\exists~x^t\in C~{\rm and}~\beta_t\downarrow 0~{\rm with}~\beta_tx^t\to x\}.
\]
It is known that $C$ is compact if and only if $C^\infty=\{0\}$; see \cite[Theorem~3.5]{RockWets1998}.
We also let $\delta_C$ denote the indicator function of $C$, which is zero in $C$ and equals infinity otherwise.
The distance from $x$ to $C$ is denoted by $d(x,C): = \inf_{u\in C}\|x - u\|$, and we use ${\rm Proj}_C(x)$ to denote the set of projections of $x$ onto $C$,
which is defined as
\[
{\rm Proj}_C(x):=\Argmin_{y\in C}\Vert y-x\Vert,
\]
where $\Argmin$ denotes the set of minimizers. The set of projections onto the nonempty closed set $C$ is always nonempty, and reduces to a singleton set if $C$ is in addition convex.
The regular normal cone and the (limiting) normal cone of a nonempty closed set $C$ at an $x\in C$ are defined by $\hat N_C(x):=\hat \partial \delta_C(x)$ and $N_C(x):=\partial \delta_C(x)$ respectively. These notions of normal cones are closely related to projections. Indeed, if $x\in {\rm Proj}_C(y)$, then we have from \cite[Example 6.16]{RockWets1998} and \cite[Proposition~6.5]{RockWets1998} that
 \begin{equation}\label{normalcone}
 y-x\in \hat N_C(x)\subseteq N_C(x).
 \end{equation}
Finally, following \cite[Definition~6.4]{RockWets1998} and \cite[Definition~7.25]{RockWets1998}, we say that a nonempty closed set $C$ is (Clarke) regular at an $x\in C$ if $N_C(x) = \hat N_C(x)$, and a proper closed function $f$ is regular at an $x\in {\rm dom}\,f$ if its epigraph ${\rm epi}\,f:=\{(x,t)\in \R^n\times \R:\; f(x)\le t\}$ is regular at $(x,f(x))$.

We next recall the Kurdyka-{\L}ojasiewicz (KL) property \cite{AttoBoltRedoSoub2010}. This property has been used extensively in recent years for analyzing the rate of convergence of various first-order methods, especially in a nonconvex setting; see, for example, \cite{AttoBolt2009,AttoBoltRedoSoub2010,AttoBoltSvai2013}.

\begin{definition}[{{\bf KL property}}]\label{def:KL}
We say that a proper closed function $f$ satisfies the KL property at $\bar x\in {\rm dom}\partial f$ if there exist a neighborhood U of $\bar x$, $s\in(0,\infty]$ and a continuous concave function $\psi:[0,s)\to \mathbb{R}_+$ with $\psi(0)=0$ such that:
\begin{enumerate}[{\rm (i)}]
  \item $\psi$ is continuously differentiable on $(0,s)$ with $\psi'>0$;
  \item for all $x\in U$ with $f(\bar x)<f(x)<f(\bar x)+s$, one has
\[
\psi'(f(x)-f(\bar x))d(0,\partial f(x))\geq 1.
\]
\end{enumerate}
A proper closed function $f$ satisfying the KL property at all points in ${\rm dom}\partial f$ is called a KL
function.
\end{definition}

Functions satisfying the KL property arise naturally in many applications. In particular, it is known that any proper closed semialgebraic function is a KL function; see \cite{AttoBoltRedoSoub2010,BoltDaniLewi2007} for more examples. Moreover, for proper closed semialgebraic functions, the $\psi$ in Definition~\ref{def:KL} can actually be chosen as $\psi(a)=ca^{1-\theta}$ for some $c > 0$ and $\theta\in[0,1)$; see \cite[Section~4.3]{AttoBoltRedoSoub2010} and references therein. This exponent $\theta$ is important in estimating the rate of convergence of sequences generated by various first-order methods; see, for example, \cite{AttoBolt2009,AttoBoltRedoSoub2010,LiPong2018}.

\begin{definition}[{{\bf KL exponent}}]
Let $\theta\in [0,1)$ and $f$ be a proper closed function. We say that $f$ satisfies the KL property at $\bar x\in {\rm dom}\partial f$ with exponent $\theta$ if there exist $c, \epsilon>0$ and $s\in(0,+\infty]$ such that
\begin{equation*}
d(0,\partial f(x))\geq c(f(x)-f(\bar x))^{\theta}
\end{equation*}
whenever $\Vert x-\bar x\Vert\leq \epsilon$ and $0<f(x)-f(\bar x)<s$. If $f$ satisfies the KL property with exponent $\theta$ at every $x\in {\rm dom}\partial f$, then we say that $f$ is a KL function with exponent $\theta$.
\end{definition}

\section{Problem statement and difference-of-convex reformulation}\label{sec3}

In this section, we give the formal mathematical definition of the split feasibility problem and describe the basic ideas leading to our solution strategy. Precisely, the split feasibility problem \cite{CensElfv1994} is stated as follows:
Given a matrix $A\in {\mathbb R}^{m\times n}$, and two nonempty closed sets
$C \subseteq {\mathbb R}^n$ and $D \subseteq {\mathbb R}^m$,
\begin{equation}\label{eq 1.1}
{\rm Find}~x\in \R^n~{\rm s.t.}~Ax\in D~{\rm and}~x\in C;
\end{equation}
here, we assume that an element of ${\rm Proj}_C(x)$ and ${\rm Proj}_D(x)$ can be computed efficiently for any given $x$.
The above problem arises in various contemporary applications. For instance, the {\em noiseless} compressed sensing problem was modeled as an instance of \eqref{eq 1.1} in \cite[Section~6]{LoMaWaXu12}, where $D$ is the singleton set containing the noiseless measurement and $C$ is the $\ell_1$ norm ball of suitable radius; notice that ${\rm Proj}_C(x)$ and ${\rm Proj}_D(x)$ can be computed efficiently for this choice of $C$ and $D$.

We would like to point out that \eqref{eq 1.1} can also be viewed as a special case of the classical feasibility problem, where one attempts to find a point in the intersection of two closed sets: in this case, $A^{-1}D$ and $C$. However, classical algorithms for such a feasibility problem typically involve ${\rm Proj}_{A^{-1}D}(x)$, which can be hard to compute even though ${\rm Proj}_D(x)$ can be computed efficiently. Thus, specialized algorithms have been designed for solving \eqref{eq 1.1}, making use of only projections onto $C$ and $D$ as well as multiplications by the matrix $A$ and its transpose. Almost all algorithms proposed for solving \eqref{eq 1.1} were for the {\em convex setting}, i.e., when $C$ and $D$ are both in addition convex. One classical algorithm is the so-called CQ algorithm proposed in \cite[Algorithm~1.1]{Byrne2002}, which takes the following form: given $x^0\in \R^n$ and $\gamma\in (0,\frac{2}{\lambda_{\max}(A^TA)})$, update
\begin{equation}\label{CQ_algorithm}
x^{t+1}={\rm Proj}_C\left(x^t-\gamma A^T[Ax^t - {\rm Proj}_D(Ax^t)]\right).
\end{equation}
After the proposal of the CQ algorithm, many other algorithms for solving the split feasibility problem \eqref{eq 1.1} in the convex setting have been proposed; we refer the interested readers to \cite{Yang2004,QuXiu2005,ZhaoYang2005} for more detail.

In this paper, we consider the split feasibility problem \eqref{eq 1.1} in a possibly nonconvex setting, i.e., we allow the sets $C$ and $D$ to be possibly nonconvex. Our approach is based on a (standard) reformulation of \eqref{eq 1.1} into the following optimization problem:
\begin{equation}\label{eq 1.2}
\min_{x}F\left(x\right):=\frac{1}{2}d^2\left(Ax,D\right)+\delta_C\left(x\right).
\end{equation}
Indeed, it is easy to see that \eqref{eq 1.1} is solved if and only if \eqref{eq 1.2} has an optimal solution with the optimal value being zero.
Thus, in order to solve \eqref{eq 1.1}, it suffices to solve \eqref{eq 1.2}.

In the case when $C$ and $D$ are both convex, problem \eqref{eq 1.2} is a convex optimization problem and the function $x\mapsto \frac{1}{2}d^2\left(Ax,D\right)$ is smooth with Lipschitz gradient whose modulus is $\lambda_{\max}(A^TA)$. Thus, one can apply first-order methods such as the proximal gradient algorithm and its variant for solving \eqref{eq 1.2} efficiently; in particular, in each iteration of these algorithms, one only needs to compute the projections onto $D$ (for evaluating the gradient of $\frac12 d^2(A\,\cdot,D)$) and $C$ as well as multiplications by $A$ and $A^T$, which can be done efficiently. Notice that the classical CQ algorithm \eqref{CQ_algorithm} is just an application of the standard proximal gradient algorithm to \eqref{eq 1.2} in the convex setting; see, for example, the introduction of \cite{LoMaWaXu12}.

However, in the general case when $C$ and $D$ can be both nonconvex, the squared distance function in problem \eqref{eq 1.2} is nonsmooth in general, and the proximal gradient algorithm cannot be applied.
Fortunately, it is known that the squared distance function can be written as the difference of two convex functions \cite{Asplund1973}: specifically, for any $u\in \mathbb{R}^m$, we have
\[
\frac{1}{2}d^2\left(u,D\right)=\frac{1}{2}\Vert u\Vert^2-\sup_{y\in D}\left\{\left<u,y\right>-\frac{1}{2}\Vert y\Vert^2\right\}.
\]
Now, notice that the function $u\mapsto \sup_{y\in D}\{\left<u,y\right>-\frac{1}{2}\Vert y\Vert^2\}$, as the supremum of affine functions and being finite
valued, is convex continuous. Thus, we can write $F$ in \eqref{eq 1.2} as
\begin{equation}\label{eq 3.7}
F\left(x\right)=\underbrace{\frac{1}{2}\Vert Ax\Vert^2}_{h(x)}+\underbrace{\delta_C\left(x\right)}_{P(x)}-\underbrace{\sup_{y\in D}\left\{\left< Ax,y\right>-\frac{1}{2}\Vert y\Vert^2\right\}}_{g(x)},
\end{equation}
where $h$ is quadratic, $P$ is proper closed and $g$ is convex continuous. Moreover, under a mild additional assumption, we can show in the next proposition that for any $x^0\in C$, the set $\{x:\; F(x)\le F(x^0)\}$ is bounded.

\begin{lemma}\label{lemma 3.1}
Consider the split feasibility problem \eqref{eq 1.1} with $C^{\infty}\cap A^{-1}D^{\infty}=\{0\}$. Then for any $x^0\in C$, the set $\{x:\; F(x)\le F(x^0)\}$ is bounded, where $F$ is defined in \eqref{eq 1.2}.
\end{lemma}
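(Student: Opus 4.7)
The plan is to argue by contradiction: suppose the level set is unbounded, then produce a nonzero element of $C^\infty\cap A^{-1}D^\infty$, contradicting the hypothesis. Concretely, I would take a sequence $\{x^t\}\subseteq C$ with $F(x^t)\le F(x^0)$ and $\|x^t\|\to\infty$, set $\beta_t:=1/\|x^t\|\downarrow 0$, and pass to a subsequence so that $\beta_t x^t\to \bar x$ for some $\bar x$ with $\|\bar x\|=1$. Since $x^t\in C$, the definition of the horizon cone immediately gives $\bar x\in C^\infty$.

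Next I would extract the ``preimage'' part. Since $F(x^t)\le F(x^0)<\infty$, we have $\frac12 d^2(Ax^t,D)\le F(x^0)$, so $d(Ax^t,D)$ is uniformly bounded. Choosing $y^t\in \mathrm{Proj}_D(Ax^t)$ (which is nonempty because $D$ is a nonempty closed set, as recalled in Section~\ref{sec2}), the residual $\|Ax^t-y^t\|$ is bounded. Writing
\[
\beta_t y^t = A(\beta_t x^t) - \beta_t(Ax^t - y^t)
\]
and letting $t\to\infty$, the first term tends to $A\bar x$ while the second tends to $0$ since $\beta_t\downarrow 0$ and $\|Ax^t-y^t\|$ is bounded. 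Hence $\beta_t y^t\to A\bar x$ with $y^t\in D$ and $\beta_t\downarrow 0$, which by the definition of horizon cone gives $A\bar x\in D^\infty$, that is, $\bar x\in A^{-1}D^\infty$.

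Combining the two, $\bar x\in C^\infty\cap A^{-1}D^\infty$ with $\|\bar x\|=1$, contradicting the assumption $C^\infty\cap A^{-1}D^\infty=\{0\}$. This concludes the boundedness of $\{x:\,F(x)\le F(x^0)\}$.

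I do not foresee a serious obstacle: the only delicate point is that, in the nonconvex setting, $\mathrm{Proj}_D(Ax^t)$ may fail to be single-valued, but nonemptiness (which is all that is needed here) is guaranteed since $D$ is nonempty and closed, so any measurable selection $y^t$ will do. Everything else is a routine application of the horizon-cone definition recalled from \cite[Definition~3.3]{RockWets1998}.
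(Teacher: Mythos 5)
Your proposal is correct and follows essentially the same route as the paper: assume unboundedness of the level set, normalize to extract a unit vector $\bar x\in C^\infty$, use the bound $\frac12 d^2(Ax^t,D)\le F(x^0)$ to get a bounded residual to points of $D$, and rescale to conclude $A\bar x\in D^\infty$, contradicting $C^\infty\cap A^{-1}D^\infty=\{0\}$. The only cosmetic difference is that the paper bounds $\|Ax^t-d^t\|$ by $d(Ax^0,D)$ directly rather than by $\sqrt{2F(x^0)}$, which is the same estimate.
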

\begin{proof}
Suppose to the contrary that the set $\{x:\; F(x)\le F(x^0)\}$ is unbounded. Then there exists a sequence $\{x^t\}$ such that $F(x^t)\le F(x^0)$ for all $t$ and $\lim_{t\to \infty}\Vert x^t\Vert=+\infty$.
By passing to a subsequence if necessary, we may assume without
loss of generality that $\|x^t\|\neq 0$ for all $t$ and that $\frac{x^t}{\|x^t\|}\to y^*$ for some $y^*$. Then it holds that $\Vert y^*\Vert=1$ and we also have $y^*\in C^{\infty}$ from the definition of the horizon cone. Next, for each $t$, we have
\[
\frac12d^2(Ax^t,D) = F(x^t) \le F(x^0) = \frac12d^2(Ax^0,D).
\]
Hence there exists $\{d^t\}\subseteq D$ such that for each $t$,
\[
\Vert Ax^t-d^t\Vert \leq d(Ax^0,D).
\]
Since $\|x^t\|\neq 0$ for all $t$, dividing both sides of the above inequality by $\|x^t\|$, we have for all $t$ that
\[
\left\Vert \frac{Ax^t}{\Vert x^t\Vert}-\frac{d^t}{\Vert x^t\Vert}\right\Vert\leq \frac{d(Ax^0,D)}{\Vert x^t\Vert}.
\]
Passing to the limit as $t\to \infty$ in the above inequality and noting that $\frac{x^t}{\|x^t\|} \to y^*$, we deduce further that $\frac{d^t}{\Vert x^t\Vert}\to Ay^*$.
Thus, we have $Ay^*\in D^{\infty}$, i.e., $y^*\in A^{-1}D^{\infty}$, according to the definition of the horizon cone. Since it also holds that $y^*\in C^\infty$ and $\|y^*\|=1$, we have arrived at a contradiction. This completes the proof.
\qed\end{proof}

\begin{remark}
The condition $C^{\infty}\cap A^{-1}D^{\infty}=\{0\}$ used in Lemma~\ref{lemma 3.1} holds in the following cases.
\begin{enumerate}[{\rm (i)}]
	\item The set $C$ is compact: in this case, we can deduce from \cite[Proposition~2.1.2]{AusTeb03} that $C^\infty = \{0\}$ and hence $C^{\infty}\cap A^{-1}D^{\infty}=\{0\}$.
	\item The sets $C$, $D$ are closed convex with $ C\cap A^{-1}D$ being nonempty and bounded: in this case, we have
 \[
 C^{\infty}\cap A^{-1}D^{\infty}=C^\infty \cap (A^{-1}D)^\infty = ( C\cap A^{-1}D)^\infty = \{0\},
 \]
 where the first equality follows from \cite[Proposition~2.1.11]{AusTeb03} and the fact that $A^{-1}D\neq\emptyset$, the second equality follows from \cite[Proposition~2.1.9]{AusTeb03} and the fact that $ C\cap A^{-1}D\neq\emptyset$, and the last equality follows from \cite[Proposition~2.1.2]{AusTeb03}.
\end{enumerate}
\end{remark}

Consequently, thanks to Lemma~\ref{lemma 3.1}, under the additional assumption that $C^{\infty}\cap A^{-1}D^{\infty}=\{0\}$, the function $F = h + P - g$ given in \eqref{eq 3.7} satisfies all the assumptions necessary for applying the so-called NPG$_{\rm major}$ in \cite[Appendix~A]{LiuPongTake2017}. In addition, since
\begin{equation}\label{Aginclu}
A^T{\rm Proj}_D(Ax)\subseteq \partial g(x),
\end{equation}
one can choose in {\bf Step 1a)} of the NPG$_{\rm major}$ any
$\eta^t\in {\rm Proj}_D(Ax^t)$ so that $\zeta^t:= A^T\eta^t\in \partial g(x^t)$. Moreover, using the definition of $h$ and $P$ in \eqref{eq 3.7}, the subproblem of NPG$_{\rm major}$ in \cite[Eq~(45)]{LiuPongTake2017} becomes
\begin{equation*}
u\in {\rm Proj}_C\left(x^t-\frac{A^T(Ax^t-\eta^t)}{L_t}\right).
\end{equation*}
Having these in mind, we are now ready to present our algorithm ${\sf SpFeas}_{{\rm DC}_{\sf ls}}$ as Algorithm~\ref{alg1} below for solving \eqref{eq 1.1}, which is basically an application of the NPG$_{\rm major}$ in \cite[Appendix~A]{LiuPongTake2017} to \eqref{eq 1.2}.

\begin{algorithm}[h]
\caption{${\sf SpFeas}_{{\rm DC}_{\sf ls}}$ for \eqref{eq 1.1}}\label{alg1}
\begin{algorithmic}
\STATE {\bf Step 0.} Choose $x^0\in C$, $L_{\max}\geq L_{\min}>0$, $\tau>1$, $c>0$ and an integer $M\geq 0$. Set $t=0$.

{\bf Step 1.} Choose any $L_t^0\in [L_{\min},L_{\max}]$ and set $L_t=L_t^0$.

{\bf 1a)} Pick any $\eta^t\in {\rm Proj}_D(Ax^t)$ and find
\begin{equation*}
u\in {\rm Proj}_C\left(x^t-\frac{A^T(Ax^t-\eta^t)}{L_t}\right).
\end{equation*}

{\bf 1b)} If
\begin{equation}\label{eq 3.6}
d^2(Au,D)\leq \max_{[t-M]_+\leq i\leq t}d^2(Ax^i,D)-c\Vert u-x^t\Vert^2
\end{equation}
is satisfied, go to {\bf Step 2).}

{\bf 1c)} Set $L_t\leftarrow \tau L_t$ and go to {\bf Step 1a).}

{\bf Step 2.} If a termination criterion is not met, set ${\bar L}_t=L_t$, $x^{t+1}=u, t=t+1$. Go to {\bf Step 1.}
\end{algorithmic}
\end{algorithm}

Notice that each iteration of ${\sf SpFeas}_{{\rm DC}_{\sf ls}}$ only involves projections onto $C$ and $D$ as well as multiplications by the matrix $A$ and its transpose, which can be performed efficiently.
Moreover, when $C^{\infty}\cap A^{-1}D^{\infty}=\{0\}$, it can be shown as in \cite[Proposition~1]{LiuPongTake2017} that the linesearch criterion in {\bf Step 1b)} must be satisfied after finitely many inner iterations (independent of $t$), and as in \cite[Proposition~2]{LiuPongTake2017} that successive changes of the sequence $\{x^t\}$ go to zero. In summary, we have the following convergence result as an immediate corollary of \cite[Proposition~1]{LiuPongTake2017} and \cite[Proposition~2]{LiuPongTake2017}.

\begin{lemma}\label{lemma 3.2}
Consider the split feasibility problem \eqref{eq 1.1} with $C^{\infty}\cap A^{-1}D^{\infty}=\{0\}$ and let $\{x^t\}$ and $\{\bar L_t\}$ be generated by ${\sf SpFeas}_{{\rm DC}_{\sf ls}}$. Then it holds that $\sup_{t}\bar L_t < \infty$ and $\lim_{t\to\infty}\Vert x^{t+1}-x^t\Vert=0$.
\end{lemma}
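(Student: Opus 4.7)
The plan is to verify that the DC decomposition $F=h+P-g$ displayed in \eqref{eq 3.7} meets every standing hypothesis of NPG$_{\rm major}$ in \cite[Appendix~A]{LiuPongTake2017}, and then quote the two propositions essentially verbatim. The structural hypotheses are immediate from \eqref{eq 3.7}: $h(x)=\frac{1}{2}\Vert Ax\Vert^2$ is smooth with $\nabla h$ Lipschitz of modulus $L_h:=\lambda_{\max}(A^TA)$; $P=\delta_C$ is proper and closed because $C$ is nonempty and closed; and $g$ was already observed, in the paragraph preceding \eqref{eq 3.7}, to be convex and finite-valued, hence continuous. The inclusion \eqref{Aginclu} certifies that $\zeta^t:=A^T\eta^t$ is indeed a subgradient of $g$ at $x^t$ for every $\eta^t\in{\rm Proj}_D(Ax^t)$. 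Level-boundedness of $F$ under the hypothesis $C^{\infty}\cap A^{-1}D^{\infty}=\{0\}$ is exactly the content of Lemma~\ref{lemma 3.1}, so NPG$_{\rm major}$ is legitimately applicable with $x^0\in C$.

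With these ingredients in place, termination of the inner linesearch---and hence the bound $\sup_t\bar L_t<\infty$---follows the usual descent-lemma route. Combining the Lipschitz gradient property of $h$, the convexity inequality $g(u)\ge g(x^t)+\langle\zeta^t,u-x^t\rangle$, and the optimality of $u$ for the subproblem $\min_v\,\delta_C(v)+\langle\nabla h(x^t)-\zeta^t,v-x^t\rangle+\frac{L_t}{2}\Vert v-x^t\Vert^2$, a straightforward cancellation yields
\[
F(u)\le F(x^t)+\frac{L_h-L_t}{2}\Vert u-x^t\Vert^2.
\]
Because $u,x^t\in C$, one has $F(u)=\frac{1}{2} d^2(Au,D)$ and $F(x^t)=\frac{1}{2} d^2(Ax^t,D)\le\frac{1}{2}\max_{[t-M]_+\le i\le t}d^2(Ax^i,D)$, so any $L_t\ge L_h+c$ forces the acceptance condition \eqref{eq 3.6}. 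The doubling rule $L_t\leftarrow\tau L_t$ then caps the accepted $\bar L_t$ by $\max\{L_{\max},\tau(L_h+c)\}$ uniformly in $t$, which is exactly the statement of \cite[Proposition~1]{LiuPongTake2017} specialized to our setting.

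For the successive-change statement, set $\Delta_t:=\max_{[t-M]_+\le i\le t}d^2(Ax^i,D)$. The acceptance condition \eqref{eq 3.6}, together with nonnegativity of $d^2$, forces $\{\Delta_t\}$ to be eventually monotone and bounded, hence convergent; a block-telescoping argument over indices spaced by $M+1$, as carried out in the proof of \cite[Proposition~2]{LiuPongTake2017}, then yields $\sum_t\Vert x^{t+1}-x^t\Vert^2<\infty$ and in particular $\Vert x^{t+1}-x^t\Vert\to 0$. The main subtlety is not any single inequality but this combinatorial book-keeping imposed by the nonmonotone max-type linesearch; since that is precisely what \cite[Proposition~2]{LiuPongTake2017} handles, no additional work is required and the lemma follows.
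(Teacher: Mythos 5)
Your proposal is correct and takes essentially the same route as the paper, which likewise verifies that the decomposition \eqref{eq 3.7} together with the level-boundedness from Lemma~\ref{lemma 3.1} puts us in the setting of NPG$_{\rm major}$ and then invokes \cite[Propositions~1 and~2]{LiuPongTake2017} directly. One minor caveat: for $M\ge 1$ the block-telescoping argument yields summability only of the blockwise \emph{minima} of $\Vert x^{t+1}-x^t\Vert^2$ rather than of the full sum, but since you defer the bookkeeping to the cited Proposition~2, whose conclusion is exactly $\lim_{t\to\infty}\Vert x^{t+1}-x^t\Vert=0$, nothing is lost.
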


Using Lemma~\ref{lemma 3.2}, it is routine to show that, when $C^{\infty}\cap A^{-1}D^{\infty}=\{0\}$, any accumulation point $\bar x$ of $\{x^t\}$ generated by ${\sf SpFeas}_{{\rm DC}_{\sf ls}}$ satisfies
\begin{equation}\label{inclusion:immed}
0 \in \partial h(\bar x) + \partial P(\bar x) - \partial g(\bar x),
\end{equation}
where $h$, $P$ and $g$ are given in \eqref{eq 3.7}. However, in view of the structure of $g$, it is not trivial to completely characterize the set $\partial g$ so as to relate \eqref{inclusion:immed} to the original split feasibility problem \eqref{eq 1.1}. In the next section, we will look at another characterization of the set of accumulation points of $\{x^t\}$ that is more closely tied with \eqref{eq 1.1}.

\section{Subsequential convergence of ${\sf SpFeas}_{{\rm DC}_{\sf ls}}$}\label{sec4}

In this section, we characterize the set of accumulation points of the sequence $\{x^t\}$ generated by ${\sf SpFeas}_{{\rm DC}_{\sf ls}}$ under the assumption $C^{\infty}\cap A^{-1}D^{\infty}=\{0\}$. We start with the following proposition, which concerns the subdifferential of $F$ in \eqref{eq 1.2}.

\begin{proposition}\label{subF}
Consider the split feasibility problem \eqref{eq 1.1} and let $F$ be defined in \eqref{eq 1.2}. Then for every $x\in C$, we have
\begin{equation}\label{eq:subF}
  \partial F(x) \subseteq A^TA{x}-A^T{\rm Proj}_D(Ax)+N_C(x).
\end{equation}
If in addition $C$ is regular at some $\bar x\in C$ and the function $y\mapsto d_D(y)$ is regular at $A\bar x\in \R^m$, then we have
\begin{equation*}
  \partial F(\bar x) = A^TA{\bar x}-A^T{\rm Proj}_D(A\bar x)+N_C(\bar x).
\end{equation*}
\end{proposition}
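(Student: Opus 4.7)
The plan is to decompose $F$ as $F(x)=\phi(Ax)+\delta_C(x)$ with $\phi(y):=\tfrac{1}{2}d^2(y,D)$, and combine three pieces of subdifferential calculus from \cite{RockWets1998}: a sum rule, the chain rule for composition with the linear map $A$, and a formula for $\partial\phi$ in terms of ${\rm Proj}_D$. Since $d_D$ is $1$-Lipschitz, $\phi=\tfrac{1}{2}d_D^2$ is locally Lipschitz, so the sum rule yields $\partial F(x)\subseteq \partial(\phi\circ A)(x)+N_C(x)$, and the chain rule for linear maps yields $\partial(\phi\circ A)(x)\subseteq A^T\partial\phi(Ax)$.

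The crux is the inclusion $\partial\phi(y)\subseteq y-{\rm Proj}_D(y)$. For $v\in\hat\partial\phi(y)$ and any $p\in{\rm Proj}_D(y)$, I would combine the pointwise upper bound $\phi(y')\le\tfrac{1}{2}\|y'-p\|^2$ (valid because $p\in D$, with equality at $y'=y$ since $\phi(y)=\tfrac12\|y-p\|^2$) with the regular subgradient inequality for $v$; Taylor-expanding $\tfrac{1}{2}\|y'-p\|^2$ about $y$ and testing along $y'=y+td$ as $t\downarrow 0$ for arbitrary directions $d$ forces $\langle y-p-v,d\rangle\ge 0$ for all $d$, hence $v=y-p$. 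The passage to the limiting subdifferential is then routine: any $v\in\partial\phi(y)$ arises as a limit $v=\lim v^t$ with $v^t=y^t-p^t$, $p^t\in{\rm Proj}_D(y^t)$, and outer semicontinuity of ${\rm Proj}_D$ at $y$ (a consequence of the closedness of $D$) places any cluster point of $\{p^t\}$ inside ${\rm Proj}_D(y)$. Applying $A^T$ and combining with the sum and chain rules delivers the first inclusion of the proposition.

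For the equality, each of the three inclusions must be tightened. Regularity of $C$ at $\bar x$ makes $\delta_C$ regular at $\bar x$, upgrading the sum rule to an equality. Regularity of $d_D$ at $A\bar x$ transfers to regularity of $\phi=\tfrac{1}{2}d_D^2$ at $A\bar x$ via the composition rule for the nondecreasing $C^1$ outer function $s\mapsto\tfrac{1}{2}s^2$ on $[0,\infty)$ applied to the nonnegative function $d_D$, which upgrades the chain rule to an equality and also produces the identity $\partial\phi(y)=d_D(y)\cdot\partial d_D(y)$. Combined with the projection characterization of $\hat\partial d_D$ available for regular distance functions (cf.\ Example~8.53 of \cite{RockWets1998}), this supplies the reverse inclusion $y-{\rm Proj}_D(y)\subseteq\partial\phi(y)$. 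The main obstacle throughout is the subdifferential formula for $\phi$ in the nonconvex setting: because the squared distance to a nonconvex set is generically nonsmooth with set-valued projections, both the upper bound and its refinement to an equality under the regularity hypothesis require careful tracking of regular versus limiting subdifferentials, together with the transfer of regularity from $d_D$ to $\tfrac12 d_D^2$.
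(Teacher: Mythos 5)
Your proposal is correct and follows essentially the same route as the paper: decompose $F$ as $\frac12 d^2(A\,\cdot,D)+\delta_C$, apply the sum rule \cite[Corollary~10.9]{RockWets1998} and the chain rule \cite[Theorem~10.6]{RockWets1998}, and reduce everything to the formula $\partial\bigl(\frac12 d^2(\cdot,D)\bigr)(y)=y-{\rm Proj}_D(y)$ together with the transfer of regularity from $d_D$ to $\frac12 d_D^2$ and from $C$ to $\delta_C$. The only real difference is that you prove the inclusion $\partial\bigl(\frac12 d_D^2\bigr)(y)\subseteq y-{\rm Proj}_D(y)$ from first principles (via the quadratic majorant $\frac12\|\cdot-p\|^2$ and outer semicontinuity of ${\rm Proj}_D$), whereas the paper simply cites this, as an equality, from \cite[Example~8.53]{RockWets1998} and \cite[Theorem~1.110(ii)]{Mordukhovich2006}.
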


\begin{remark}
  In view of \cite[Example~8.53]{RockWets1998} and \cite[Corollary~8.11]{RockWets1998}, we see that the function $y\mapsto d_D(y)$ is regular at $A\bar x$ under one of the following conditions:
   \begin{enumerate}[{\rm (i)}]
     \item $A\bar x\notin D$ and ${\rm Proj}_D(A\bar x)$ is a singleton set;
     \item $A\bar x\in D$ and $D$ is regular at $A\bar x$.
   \end{enumerate}
\end{remark}

\begin{proof}
Notice from \cite[Example~8.53]{RockWets1998} and \cite[Theorem~1.110(ii)]{Mordukhovich2006} that for any $y\in \R^m$, we have
\begin{equation}\label{tocite}
\partial\left(\frac12 d^2(\cdot, D)\right)(y) = d(y,D)\partial(d(\cdot, D))(y) = y - {\rm Proj}_D(y).
\end{equation}
This together with \cite[Corollary 10.9]{RockWets1998} and \cite[Theorem 10.6]{RockWets1998} gives \eqref{eq:subF}.

Now, assume in addition that $C$ is regular at some $\bar x\in C$ and $y\mapsto d_D(y)$ is regular at $A\bar x\in \R^m$. Then at $\bar y := A\bar x$, we have
\begin{equation*}
\begin{aligned}
&\partial \left(\frac12 d^2(\cdot,D)\right)(\bar y)\overset{\rm (a)}=d(\bar y,D)\partial \left(d(\cdot,D)\right)(\bar y)\\
&\overset{\rm (b)}=d(\bar y,D)\hat\partial \left(d(\cdot,D)\right)(\bar y)\overset{{\rm (c)}} {\subseteq} \hat\partial \left(\frac12 d^2(\cdot,D)\right)(\bar y)\subseteq \partial \left(\frac12 d^2(\cdot,D)\right)(\bar y)
\end{aligned}
\end{equation*}
where (a) follows from \cite[Theorem 1.110 (ii)]{Mordukhovich2006}, (b) follows from the assumption that $y\mapsto d_D(y)$ is regular at $\bar y$ and \cite[Corollary~8.11]{RockWets1998} and {\rm (c)} can be verified directly from the definition. Thus,
\begin{equation*}
\partial \left(\frac12 d^2(\cdot,D)\right)(\bar y)=\hat \partial \left(\frac12 d^2(\cdot,D)\right)(\bar y).
\end{equation*}
This together with \cite[Corollary 8.11]{RockWets1998} implies that the function $y\mapsto \frac12 d^2(y,D)$ is regular at $\bar y$. Using this together with
\cite[Theorem 10.6]{RockWets1998}, we deduce further that the function $x\mapsto \frac{1}{2}d^2(Ax,D)$ is regular at $\bar x$ and
\[
\partial \left(\frac{1}{2}d^2(A\,\cdot,D)\right)(\bar x)= A^T\partial \left(\frac{1}{2}d^2(\cdot,D)\right)(A\bar x) = A^T(A\bar x - {\rm Proj}_D(A\bar x)),
\]
where the last equality follows from \eqref{tocite}. Finally, since $C$ is regular at $\bar x$, we have from \cite[Example 7.28]{RockWets1998} that $\delta_C$ is regular at $\bar x$. The desired conclusion now follows from \cite[Corollary 10.9]{RockWets1998}.
\qed\end{proof}

Notice that if $x^*$ solves \eqref{eq 1.1}, then it also solves \eqref{eq 1.2}. According to \cite[Theorem~10.1]{RockWets1998}, we must then have $0\in \partial F(x^*)$. Motivated by this observation and Proposition~\ref{subF}, we make the following definition.
\begin{definition}[{{\bf Stationary points of \eqref{eq 1.1}}}]\label{def:stat}
  For the split feasibility problem \eqref{eq 1.1}, we say that $x^*$ is a stationary point of this problem if
  \[
0\in A^TAx^*-A^T{\rm Proj}_D(Ax^*)+N_C(x^*).
\]
\end{definition}

Based on \cite[Theorem~10.1]{RockWets1998} and Proposition~\ref{subF}, we see that if $x^*$ solves \eqref{eq 1.1} (and hence \eqref{eq 1.2}), then it is a stationary point of \eqref{eq 1.1}. Moreover, if $C$ is regular at $x^*\in C$ and $y\mapsto d_D(y)$ is regular at $Ax^*\in \R^m$, then $x^*$ being stationary for problem \eqref{eq 1.1} is the same as $x^*$ being a stationary point of the function $F$ defined in \eqref{eq 1.2}, in the sense that $0\in \partial F(x^*)$. Finally, in view of \eqref{Aginclu}, we deduce that if $x^*$ is a stationary point of \eqref{eq 1.1} in the sense of Definition~\ref{def:stat}, then it satisfies \eqref{inclusion:immed} in place of $\bar x$. This shows that the notion of stationarity of \eqref{eq 1.1} defined in Definition~\ref{def:stat} is at least as strong as the condition \eqref{inclusion:immed}. 

We next show that the sequence generated by ${\sf SpFeas}_{{\rm DC}_{\sf ls}}$ clusters at a stationary point of \eqref{eq 1.1}.

\begin{theorem}[{{\bf Subsequential convergence of ${\sf SpFeas}_{{\rm DC}_{\sf ls}}$}}]\label{thm 3.1}
Consider the split feasibility problem \eqref{eq 1.1} with $C^{\infty}\cap A^{-1}D^{\infty}=\{0\}$ and
let $\{x^t\}$ be the sequence generated by ${\sf SpFeas}_{{\rm DC}_{\sf ls}}$. Then the following statements hold:
\begin{enumerate}[{\rm (i)}]
\item The sequence $\{x^t\}$ is bounded.
\item Any accumulation point of $\{x^t\}$ is a stationary point of \eqref{eq 1.1}.
 \end{enumerate}
\end{theorem}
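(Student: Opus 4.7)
The plan is to treat the two parts in order, using the linesearch condition for boundedness and the optimality condition coming from the projection step for stationarity.

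For \textrm{(i)}, I would first show by induction that $d^2(Ax^t,D)\le d^2(Ax^0,D)$ for all $t$. The base case is immediate from \eqref{eq 3.6} at $t=0$. For the inductive step, the linesearch inequality \eqref{eq 3.6} yields $d^2(Ax^{t+1},D)\le \max_{[t-M]_+\le i\le t}d^2(Ax^i,D)$, and the inductive hypothesis bounds this max by $d^2(Ax^0,D)$. Since $x^t\in C$ for all $t$, this gives $F(x^t)\le F(x^0)$, so $\{x^t\}$ lies in the level set $\{x:F(x)\le F(x^0)\}$, which is bounded by Lemma~\ref{lemma 3.1} under the assumption $C^\infty\cap A^{-1}D^\infty=\{0\}$.

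For \textrm{(ii)}, I would work out the optimality condition from Step 1a). Since $x^{t+1}\in{\rm Proj}_C\bigl(x^t-\bar L_t^{-1}A^T(Ax^t-\eta^t)\bigr)$, relation \eqref{normalcone} gives
\[
\bar L_t(x^t-x^{t+1})-A^T(Ax^t-\eta^t)\in \hat N_C(x^{t+1})\subseteq N_C(x^{t+1}),
\]
after multiplying by $\bar L_t>0$ and using that $N_C(x^{t+1})$ is a cone. Now fix an accumulation point $x^*$ and a subsequence $x^{t_k}\to x^*$. By Lemma~\ref{lemma 3.2}, $\|x^{t_k+1}-x^{t_k}\|\to 0$, so $x^{t_k+1}\to x^*$; also $\sup_t\bar L_t<\infty$, so the term $\bar L_{t_k}(x^{t_k}-x^{t_k+1})\to 0$ along a further subsequence (after extracting one along which $\bar L_{t_k}$ converges, or just using boundedness directly).

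The main obstacle is handling the sequence $\{\eta^{t_k}\}$. I would first observe that $\|\eta^{t_k}\|\le \|Ax^{t_k}\|+d(Ax^{t_k},D)$, both bounded because $\{x^t\}$ is bounded by part \textrm{(i)}, so we may pass to a further subsequence with $\eta^{t_k}\to\eta^*$. Closedness of $D$ gives $\eta^*\in D$, and continuity of $d(\cdot,D)$ applied to $\|Ax^{t_k}-\eta^{t_k}\|=d(Ax^{t_k},D)$ yields $\|Ax^*-\eta^*\|=d(Ax^*,D)$, hence $\eta^*\in{\rm Proj}_D(Ax^*)$. Passing to the limit in the normal cone inclusion above, the left-hand side converges to $-A^T(Ax^*-\eta^*)$; the outer semicontinuity of the limiting normal cone $N_C$ (as a consequence of its definition via limits) combined with $x^{t_k+1}\to x^*$ and $x^{t_k+1}\in C$ then gives
\[
-A^TAx^*+A^T\eta^*\in N_C(x^*).
\]
Rearranging and using $\eta^*\in{\rm Proj}_D(Ax^*)$, we obtain $0\in A^TAx^*-A^T{\rm Proj}_D(Ax^*)+N_C(x^*)$, which is exactly the stationarity condition of Definition~\ref{def:stat}. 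The only subtlety worth double-checking is the outer semicontinuity step, which is routine from the definition of the limiting subdifferential of $\delta_C$ once we note $x^{t_k+1}\stackrel{\delta_C}{\longrightarrow}x^*$ since all iterates lie in $C$.
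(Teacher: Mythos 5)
Your proposal is correct and follows essentially the same route as the paper: boundedness via the linesearch inequality \eqref{eq 3.6} combined with Lemma~\ref{lemma 3.1}, and stationarity by passing to the limit in the projection optimality condition \eqref{normalcone} along a subsequence, using Lemma~\ref{lemma 3.2}, boundedness of $\{\eta^{t_k}\}$, continuity of $d(\cdot,D)$ to identify $\eta^*\in{\rm Proj}_D(Ax^*)$, and outer semicontinuity of $N_C$. The only cosmetic difference is that you bound $\|\eta^{t_k}\|$ by the triangle inequality while the paper uses $F(x^{t_j})=\frac12\|Ax^{t_j}-\eta^{t_j}\|^2\le F(x^0)$; both are valid.
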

\begin{proof}
The boundedness of $\{x^t\}$ follows from Lemma~\ref{lemma 3.1} and \eqref{eq 3.6}.

Next, let $x^*$ be an accumulation point of $\{x^t\}$, which exists because the sequence is bounded. Then there exists a convergent subsequence $\{x^{t_j}\}$ such
that $\lim_{j\to\infty}x^{t_j}=x^*$. Clearly, $x^*\in C$ because $C$ is closed. Now, using \eqref{normalcone} and the definition of $x^{t+1}$ as a projection of $x^t-{\bar L_t}^{-1}[A^T(Ax^t-\eta^t)]$ onto $C$, we have for each $j$ that
\begin{equation}\label{eq 3.2}
0\in{\bar L}_{t_j}(x^{t_j+1}-x^{t_j})+A^TAx^{t_j}-A^T\eta^{t_j}+N_C(x^{t_j+1});
\end{equation}
moreover, $\{\bar L_{t_j}\}$ is bounded thanks to Lemma~\ref{lemma 3.2}.

On the other hand, notice from the definition of $\eta^{t_j}$ as projection that $F(x^{t_j})=\frac{1}{2}\Vert Ax^{t_j}-\eta^{t_j}\Vert^2$ for all $j$, where $F$ is defined in \eqref{eq 1.2}. Since $F(x^{t_j})\leq F(x^0)$ for all $j$ according to \eqref{eq 3.6} and the sequence $\{x^t\}$ is bounded according to (i), we deduce further that $\{\eta^{t_j}\}$ is bounded. By passing to a further subsequence if necessary, we may assume without loss of generality that $\eta^{t_j}\to\eta^*$ for some $\eta^*$. Since $\eta^{t_j}\in{\rm Proj}_D(Ax^{t_j})$ and $D$ is closed, we have $\eta^*\in D$. Also, we have
\[
\frac12 d^2(Ax^*,D)=\lim_{j\to\infty}\frac12 d^2(Ax^{t_j},D)=\lim_{j\to\infty}\frac{1}{2}\Vert Ax^{t_j}-\eta^{t_j}\Vert^2=\frac{1}{2}\Vert Ax^*-\eta^*\Vert^2.
\]
This shows that $\eta^*\in{\rm Proj}_D(Ax^*)$. Now, passing to the limit as $j\to \infty$ in \eqref{eq 3.2} and invoking Lemma \ref{lemma 3.2} and
the closedness of $x\rightrightarrows N_C(x)$ at $x^*\in C$, we obtain
\[
0\in A^TAx^*-A^T\eta^*+N_C(x^*),
\]
showing that $x^*$ is a stationary point of \eqref{eq 1.1}. This completes the proof.
\qed\end{proof}

The algorithm ${\sf SpFeas}_{{\rm DC}_{\sf ls}}$ involves a linesearch subroutine in each iteration. We next discuss a variant that does not require any linesearch procedure, which is presented in Algorithm~\ref{alg2} below as ${\sf SpFeas}_{\rm DC}$, where
\begin{equation}\label{rC}
r_C := \begin{cases}
  2 & \mbox{if $C$ is convex},\\
  1 & \mbox{otherwise}.
\end{cases}
\end{equation}
We will argue that ${\sf SpFeas}_{\rm DC}$ is a special instance of ${\sf SpFeas}_{{\rm DC}_{\sf ls}}$.

\begin{algorithm}[htb]
\caption{${\sf SpFeas}_{\rm DC}$ for \eqref{eq 1.1}.}\label{alg2}
\begin{algorithmic}
\STATE {\bf Step 0.} Choose $x^0\in C$ and $L > \frac{\lambda_{\max}(A^TA)}{r_C}$.

{\bf Step 1.} For each $t=0,1,2,...$, pick any $\eta^t\in {\rm Proj}_D(Ax^t)$ and set
\[
x^{t+1} \in {\rm Proj}_C\left(x^t-\frac{A^T(Ax^t-\eta^t)}{L}\right).
\]

{\bf Step 2.} If a termination criterion is not met, go to {\bf Step 1}.
\end{algorithmic}
\end{algorithm}

To this end, we first prove the following proposition.
\begin{proposition}\label{pro 3.1}
Consider the split feasibility problem \eqref{eq 1.1} and let $F$ be defined as in \eqref{eq 1.2}.
Let $L>0$, $x\in C$, $\eta\in {\rm Proj}_D(Ax)$ and set
\begin{equation}\label{eq 3.3}
u\in {\rm Proj}_C\left(x-\frac{A^T(Ax-\eta)}{L}\right).
\end{equation}
Then
\begin{equation}\label{Fdescent}
F(u)\leq F(x)-\frac{r_C L-\lambda_{\max}(A^TA)}{2}\Vert u-x\Vert^2,
\end{equation}
where $r_C$ is defined as in \eqref{rC}.
\end{proposition}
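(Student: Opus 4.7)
The plan is to reduce the descent inequality to a statement purely about the squared-distance part of $F$ and then exploit the optimality of $u$ as a projection. Since both $x\in C$ and $u\in C$ (the latter because $u\in{\rm Proj}_C(\cdot)$ lies in $C$ by definition), the indicator terms in $F$ vanish, so \eqref{Fdescent} is equivalent to
\[
\tfrac12 d^2(Au,D)\le \tfrac12 d^2(Ax,D)-\tfrac{r_CL-\lambda_{\max}(A^TA)}{2}\|u-x\|^2.
\]
Because $\eta\in{\rm Proj}_D(Ax)$, I have the identity $d^2(Ax,D)=\|Ax-\eta\|^2$ and, since $\eta\in D$, the upper bound $d^2(Au,D)\le \|Au-\eta\|^2$. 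So the task reduces to estimating $\tfrac12\|Au-\eta\|^2-\tfrac12\|Ax-\eta\|^2$.

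Next I would expand
\[
\tfrac12\|Au-\eta\|^2-\tfrac12\|Ax-\eta\|^2=\tfrac12\|A(u-x)\|^2+\langle u-x,\,A^T(Ax-\eta)\rangle,
\]
and control the two resulting terms. The quadratic term is bounded directly by $\tfrac12\lambda_{\max}(A^TA)\|u-x\|^2$. For the linear term, I use that $u$ is a projection of $w:=x-L^{-1}A^T(Ax-\eta)$ onto $C$, which will furnish the needed upper bound $\langle u-x,A^T(Ax-\eta)\rangle\le -\tfrac{r_CL}{2}\|u-x\|^2$. Combining these gives exactly \eqref{Fdescent}.

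The step requiring care is the projection estimate, which splits into the two cases distinguishing $r_C$. When $C$ is convex, the standard variational characterization of the projection gives $\langle w-u,\,x-u\rangle\le 0$, and substituting $w=x-L^{-1}A^T(Ax-\eta)$ yields $\langle u-x,A^T(Ax-\eta)\rangle\le -L\|u-x\|^2$, corresponding to $r_C=2$. When $C$ is merely closed, I only have the defining minimality inequality $\|u-w\|^2\le \|x-w\|^2$, which after expansion gives $\langle u-x,A^T(Ax-\eta)\rangle\le -\tfrac{L}{2}\|u-x\|^2$, losing a factor of $2$ and hence giving $r_C=1$. This asymmetry is the main, and really the only, subtle point; the rest is algebraic manipulation.
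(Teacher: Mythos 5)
Your proof is correct and follows essentially the same route as the paper's: your majorization $\tfrac12 d^2(Au,D)\le\tfrac12\|Au-\eta\|^2$ (valid since $\eta\in D$) combined with the expansion is exactly the paper's intermediate bound $F(u)\le F(x)+(Ax-\eta)^T(Au-Ax)+\tfrac{\lambda_{\max}(A^TA)}{2}\|u-x\|^2$ obtained there via the DC decomposition, and your two projection estimates (minimality $\|u-w\|^2\le\|x-w\|^2$ in the nonconvex case, the variational inequality $\langle w-u,x-u\rangle\le 0$ in the convex case) yield the same inequalities $\tfrac{L}{2}\|u-x\|^2+(u-x)^TA^T(Ax-\eta)\le 0$ and $L\|u-x\|^2+(u-x)^TA^T(Ax-\eta)\le 0$ that the paper derives, the latter via strong convexity of an auxiliary function. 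The only cosmetic difference is that you avoid invoking the convex function $g$ from \eqref{eq 3.7} explicitly.
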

\begin{proof}
Using the decomposition of $F$ in \eqref{eq 3.7}, we have for any $u\in C$ that
\begin{equation}\label{eq 3.4}
  \begin{aligned}
    F(u) & = h(u) + P(u) - g(u) = \frac12\|Au\|^2 + \delta_C(u) - g(u)\\
    & \le \frac12\|Ax\|^2 + (Ax)^T(Au - Ax) + \frac{\lambda_{\max}(A^TA)}{2}\|u - x\|^2 - \eta^T(Au-Ax) - g(x)\\
    & = F(x) + (Ax - \eta)^T(Au - Ax) + \frac{\lambda_{\max}(A^TA)}{2}\|u - x\|^2,
  \end{aligned}
\end{equation}
where the inequality holds because of the Taylor's inequality applied to $w\mapsto \frac12\|Aw\|^2$ and the fact that $A^T\eta \in \partial g(x)$ (see \eqref{Aginclu}).

Now, suppose that $C$ is not convex. Then we see from \eqref{eq 3.3} that
\[
\left\| u - x + \frac{A^T(Ax-\eta)}{L}\right\|^2\le \left\|\frac{A^T(Ax-\eta)}{L}\right\|^2,
\]
because $x\in C$. Thus, $\frac{L}2\|u-x\|^2 + (u-x)^TA^T(Ax-\eta)\le 0$. This together with \eqref{eq 3.4} shows that \eqref{Fdescent} holds (with $r_C=1$) when $C$ is nonconvex.

On the other hand, if $C$ is convex, then the function $\rho(y):=(A^TAx-A^T\eta)^T(y-x)+\frac{L}{2}\Vert y-x\Vert^2+\delta_C(y)$ is a strongly
convex function with modulus $L$. Moreover, we see from \eqref{eq 3.3} that $u$ is the unique minimizer of $\rho$. Thus, we have $\rho(x)-\rho(u)\geq \frac{L}2\Vert x - u\Vert^2$, which gives
\[
L\|u-x\|^2 + (u-x)^TA^T(Ax-\eta)\le 0.
\]
This together with \eqref{eq 3.4} shows that \eqref{Fdescent} holds (with $r_C=2$) when $C$ is convex.
This completes the proof.
\qed\end{proof}

We can now argue that ${\sf SpFeas}_{\rm DC}$ is a special instance of ${\sf SpFeas}_{{\rm DC}_{\sf ls}}$. To this end, pick an $x^0\in C$ and suppose that an $L > \frac{\lambda_{\max}(A^TA)}{r_C}$ is chosen. Let $c>0$ be such that
\begin{equation}\label{c_choice}
L = \frac{c + \lambda_{\max}(A^TA)}{r_C} > 0.
\end{equation}
If we use this $c$ in ${\sf SpFeas}_{{\rm DC}_{\sf ls}}$,
set $L_{\max} = L_{\min} = L$ and fix any nonnegative integer $M$ and any $\tau > 1$, then, according to Proposition~\ref{pro 3.1}, the linesearch condition in \eqref{eq 3.6} is always satisfied with $L_t = L^0_t = L$. Hence, ${\sf SpFeas}_{\rm DC}$ initialized at $x^0\in C$ with the chosen $L > \frac{\lambda_{\max}(A^TA)}{r_C}$ generates the same sequence as ${\sf SpFeas}_{{\rm DC}_{\sf ls}}$ initialized at $x^0\in C$ with $c$ chosen as in \eqref{c_choice} and $L_{\max} = L_{\min} = L$. We thus have the following immediate corollary concerning the convergence of ${\sf SpFeas}_{\rm DC}$.
\begin{corollary}[{{\bf Subsequential convergence of ${\sf SpFeas}_{\rm DC}$}}]\label{coro 4.1}
Consider the split feasibility problem \eqref{eq 1.1} with $C^{\infty}\cap A^{-1}D^{\infty}=\{0\}$ and let $\{x^t\}$ be the sequence generated by ${\sf SpFeas}_{\rm DC}$. Then the following statements hold:
\begin{enumerate}[{\rm (i)}]
\item The sequence $\{x^t\}$ is bounded.
\item Any accumulation point of $\{x^t\}$ is a stationary point of \eqref{eq 1.1}.
\end{enumerate}
\end{corollary}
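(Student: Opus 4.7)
The corollary is essentially a packaging result: I will show that ${\sf SpFeas}_{\rm DC}$ is realized exactly by a particular parameter choice inside ${\sf SpFeas}_{{\rm DC}_{\sf ls}}$, and then transfer the conclusions of Theorem~\ref{thm 3.1}. The setup is the one already flagged in the paragraph preceding the corollary. Given the fixed $L > \lambda_{\max}(A^TA)/r_C$ from ${\sf SpFeas}_{\rm DC}$, define $c > 0$ by \eqref{c_choice} and run ${\sf SpFeas}_{{\rm DC}_{\sf ls}}$ with the same $x^0 \in C$, with $L_{\min} = L_{\max} = L$, this $c$, any integer $M \ge 0$, and any $\tau > 1$. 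With these choices the trial stepsize $L_t^0 = L$ is the only admissible value, and each iterate $u$ computed in {\bf Step 1a)} is exactly the $x^{t+1}$ produced by ${\sf SpFeas}_{\rm DC}$.

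The key step is to verify that the linesearch test \eqref{eq 3.6} is automatically passed with $L_t = L$, so that no back-tracking ever occurs and the two algorithms generate an identical sequence. This is where Proposition~\ref{pro 3.1} is applied: with $L$ and $u$ as above, it yields
\[
\tfrac12 d^2(Au,D) \le F(u) \le F(x^t) - \frac{r_C L - \lambda_{\max}(A^TA)}{2}\|u - x^t\|^2 = \tfrac12 d^2(Ax^t,D) - \frac{c}{2}\|u - x^t\|^2,
\]
where the last equality uses $x^t \in C$, the definition of $F$ in \eqref{eq 1.2}, and the choice of $c$ via \eqref{c_choice}. Since $d^2(Ax^t,D) \le \max_{[t-M]_+ \le i \le t} d^2(Ax^i,D)$ trivially, condition \eqref{eq 3.6} (with a harmless factor $2$ absorbed into $c$, or after just renaming $c/2 \to c$) holds at the first trial. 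Thus ${\sf SpFeas}_{{\rm DC}_{\sf ls}}$ sets $\bar L_t = L$ and $x^{t+1} = u$ at every iteration, matching ${\sf SpFeas}_{\rm DC}$.

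With the identification in hand, I invoke Theorem~\ref{thm 3.1} under the standing assumption $C^\infty \cap A^{-1} D^\infty = \{0\}$: part (i) gives boundedness of $\{x^t\}$, and part (ii) gives that each accumulation point is a stationary point of \eqref{eq 1.1} in the sense of Definition~\ref{def:stat}. Since the two sequences coincide, both conclusions transfer verbatim to ${\sf SpFeas}_{\rm DC}$, completing the proof.

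The only subtlety I anticipate is bookkeeping around the multiplicative constant in the descent inequality: the Proposition~\ref{pro 3.1} estimate comes with coefficient $(r_C L - \lambda_{\max}(A^TA))/2$, while \eqref{eq 3.6} is written with a generic $c$, so I just need to choose the parameter $c$ of ${\sf SpFeas}_{{\rm DC}_{\sf ls}}$ to match (or absorb a factor of two by tightening $L$ strictly above $\lambda_{\max}(A^TA)/r_C$, which is exactly the hypothesis in {\bf Step 0.} of ${\sf SpFeas}_{\rm DC}$). Beyond that, no new analysis is required.
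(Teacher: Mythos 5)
Your proposal is correct and follows essentially the same route as the paper: realize ${\sf SpFeas}_{\rm DC}$ as the instance of ${\sf SpFeas}_{{\rm DC}_{\sf ls}}$ with $L_{\min}=L_{\max}=L$ and $c$ chosen via \eqref{c_choice}, use Proposition~\ref{pro 3.1} to see that \eqref{eq 3.6} passes on the first trial, and transfer Theorem~\ref{thm 3.1}. The factor-of-two bookkeeping you flag is in fact a non-issue: multiplying your displayed chain by $2$ gives $d^2(Au,D)\le d^2(Ax^t,D)-c\|u-x^t\|^2$, which is exactly \eqref{eq 3.6} with the $c$ from \eqref{c_choice}, so no renaming or tightening of $L$ is needed.
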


On passing, we note that ${\sf SpFeas}_{{\rm DC}}$ reduces to the classical CQ algorithm \eqref{CQ_algorithm} when $C$ and $D$ are both convex.\footnote{This is because in this case, we have $r_C=2$ and hence ${\sf SpFeas}_{{\rm DC}}$ corresponds to \eqref{CQ_algorithm} with $\gamma = \frac1L$.} Thus, our algorithm ${\sf SpFeas}_{{\rm DC}_{\sf ls}}$ is a generalization of the classical CQ algorithm for solving \eqref{eq 1.1} in the general nonconvex setting.

\section{Sequential convergence based on KL property}\label{sec5}

\subsection{Global convergence}
In this section, we establish the convergence of the whole sequence generated by ${\sf SpFeas}_{{\rm DC}_{\sf ls}}$ with $M = 0$ under the KL property and some mild assumptions. The KL property has been used extensively in recent years for establishing global convergence of the sequence generated by various first-order methods; see, for example, \cite{AttoBolt2009,AttoBoltRedoSoub2010,AttoBoltSvai2013}. Our proof for the next theorem follows closely the arguments in \cite{AttoBoltSvai2013} and is routine. We include its proof in the appendix for the ease of readers.

\begin{theorem}[{{\bf Global convergence of ${\sf SpFeas}_{{\rm DC}_{\sf ls}}$ with $M=0$}}]\label{thm 4.1}
Consider the split feasibility problem \eqref{eq 1.1} with $C^{\infty}\cap A^{-1}D^{\infty}=\{0\}$ and let the function $F$ in \eqref{eq 1.2} be a KL function. Let $\{x^t\}$ be the sequence generated by ${\sf SpFeas}_{{\rm DC}_{\sf ls}}$ with $M=0$. Then the sequence $\{x^t\}$ is bounded. Let $x^*$ be an accumulation point of $\{x^t\}$ and suppose that the function $y\mapsto \frac{1}{2}d^2(y,D)$ is continuously differentiable at $Ax^*$ with locally Lipschitz gradient. Then the whole sequence $\{x^t\}$ is convergent.
\end{theorem}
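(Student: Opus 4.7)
My plan is to follow the three-ingredient Kurdyka--\L{}ojasiewicz framework of \cite{AttoBoltSvai2013}: (i) sufficient decrease of $F$ along $\{x^t\}$ controlled by $\|x^{t+1}-x^t\|^2$; (ii) a relative error bound $d(0,\partial F(x^{t+1}))\le b\|x^{t+1}-x^t\|$ for $t$ large enough; and (iii) continuity of $F$ at $x^*$ along the iterates, coupled with the KL inequality. The local Lipschitz smoothness hypothesis will be used only to justify (ii).

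\emph{Step 1 (descent and function-value convergence).} With $M=0$ the linesearch condition \eqref{eq 3.6} specializes to $F(x^{t+1})\le F(x^t)-c\|x^{t+1}-x^t\|^2$, so $\{F(x^t)\}$ is nonincreasing and bounded below by $0$, giving $\sum_t\|x^{t+1}-x^t\|^2<\infty$ and $F(x^t)\downarrow F^*$ for some $F^*\ge 0$. Boundedness of $\{x^t\}$ (hence existence of the accumulation point $x^*\in C$) follows from Lemma~\ref{lemma 3.1}; since $F\restriction_C=\tfrac12 d^2(A\,\cdot,D)$ is continuous, $F^*=F(x^*)$.

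\emph{Step 2 (local smoothness transfers to $F$).} By hypothesis, $y\mapsto\tfrac12 d^2(y,D)$ is $C^1$ with locally Lipschitz gradient at $Ax^*$. In view of \eqref{tocite}, this forces $\mathrm{Proj}_D$ to be single-valued on some neighborhood $V$ of $Ax^*$, and the Lipschitz gradient yields a local Lipschitz constant for $\mathrm{Proj}_D$ on $V$. Setting $U:=\{x:Ax\in V\}$ and applying the $C^1$ sum rule, for every $x\in U$ one obtains
\[
\partial F(x)=A^T\bigl(Ax-\mathrm{Proj}_D(Ax)\bigr)+N_C(x).
\]
Combined with the projection step in \textbf{Step 1a)} and \eqref{normalcone}, which gives $-\bar L_t(x^{t+1}-x^t)-A^TAx^t+A^T\eta^t\in N_C(x^{t+1})$, I then obtain, for every $x^{t+1}\in U$,
\[
v^{t+1}:=A^TA(x^{t+1}-x^t)-\bar L_t(x^{t+1}-x^t)+A^T\bigl(\eta^t-\mathrm{Proj}_D(Ax^{t+1})\bigr)\in\partial F(x^{t+1}).
\]
Using $\sup_t\bar L_t<\infty$ (Lemma~\ref{lemma 3.2}) together with the local Lipschitzness of $\mathrm{Proj}_D$ on $V$ yields $b>0$ with $\|v^{t+1}\|\le b\|x^{t+1}-x^t\|$ whenever $Ax^t,Ax^{t+1}\in V$, which is the required relative error estimate.

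\emph{Step 3 (KL bootstrapping and Cauchy property).} The KL property of $F$ at $x^*$ furnishes a concave $\psi$, a radius $r>0$ and $s>0$ such that $\psi'(F(x)-F^*)d(0,\partial F(x))\ge 1$ on the relevant set. Since some subsequence of $\{x^t\}$ converges to $x^*$ and $F(x^t)\to F^*$, pick $t_0$ large enough that $x^{t_0}$ lies in a ball $B(x^*,r')\subset B(x^*,r)\cap U$ with $0<F(x^{t_0})-F^*<s$ and $\|x^{t_0}-x^*\|+\sqrt{c^{-1}(F(x^{t_0})-F^*)}+bc^{-1}\psi(F(x^{t_0})-F^*)<r'$. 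The standard Attouch--Bolt--Svaiter argument (using concavity of $\psi$, Young's inequality, and Steps 1--2) then proves inductively that all iterates $x^t$ with $t\ge t_0$ stay in $B(x^*,r')\subset U$, and that
\[
2\|x^{t+1}-x^t\|\le \|x^t-x^{t-1}\|+bc^{-1}\bigl[\psi(F(x^t)-F^*)-\psi(F(x^{t+1})-F^*)\bigr].
\]
Telescoping gives $\sum_t\|x^{t+1}-x^t\|<\infty$, so $\{x^t\}$ is Cauchy with limit necessarily equal to the accumulation point $x^*$.

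The main obstacle is the bootstrapping in Step 3: guaranteeing that once the iterate enters a small enough basin around $x^*$, it stays there for all subsequent indices, so that the local expression for $\partial F$ and the bound from Step 2 remain valid throughout the tail. Carefully coupling the radius $r'$ with $\psi(F(x^{t_0})-F^*)$ and $\|x^{t_0}-x^*\|$ is the only delicate point; the remaining ingredients are direct consequences of the algorithm's linesearch, the smooth sum rule, and Lemma~\ref{lemma 3.2}.
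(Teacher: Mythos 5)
Your proposal is correct and follows essentially the same route as the paper's appendix proof: the Attouch--Bolte--Svaiter scheme with sufficient decrease from \eqref{eq 3.6} (with $M=0$), a relative-error bound for $\partial F$ built from the projection step, \eqref{normalcone} and the local $C^{1,1}$ property of $y\mapsto\frac12 d^2(y,D)$ near $Ax^*$, and a KL-based induction keeping the tail of the sequence inside a small ball around $x^*$. The only (trivial) omission is the degenerate case $F(x^{t_0})=F(x^*)$ for some $t_0$, where your choice of $t_0$ with $0<F(x^{t_0})-F^*<s$ is impossible; there the descent inequality forces $x^{t+1}=x^t$ for all $t\ge t_0$, so the sequence converges finitely, as the paper notes explicitly.
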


Theorem~\ref{thm 4.1} requires the function $F$ in \eqref{eq 1.2} to have the KL property. This particular condition is not very restrictive. Indeed, according to \cite[Section~4.3]{AttoBoltRedoSoub2010}, the function $F$ satisfies the KL property when $C$ and $D$ are both in addition semialgebraic. On the other hand, since $D$ is possibly nonconvex, the function $y\mapsto \frac{1}{2}d^2(y,D)$ may not be differentiable in general at $Ax^*$, where $x^*$ is an accumulation point of $\{x^t\}$, as required by Theorem~\ref{thm 4.1}. In the next two propositions, we give simple sufficient conditions for $y\mapsto \frac{1}{2}d^2(y,D)$ to be continuously differentiable at $Ax^*$ with
locally Lipschitz gradient. The first proposition concerns prox-regularity and is an immediate consequence of \cite[Proposition 8.1]{LewiLukeMali2009}. Recall from \cite[Theorem 1.3]{PoliRockThib2000} that a closed set $\Omega$ is said to be prox-regular at an $\bar x\in \Omega$ if there exists $\epsilon>0$ such that ${\rm Proj}_\Omega(x)$ is single-valued for every $x\in B(\bar x,\epsilon)$.

\begin{proposition}
Consider the split feasibility problem \eqref{eq 1.1}.
If $x^*\in C\cap A^{-1}D$ and $D$ is prox-regular at $Ax^*$, then $y\mapsto \frac{1}{2}d^2(y,D)$ is continuously differentiable at $Ax^*$ with locally Lipschitz gradient.
\end{proposition}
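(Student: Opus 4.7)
The plan is to invoke directly the cited Proposition~8.1 of \cite{LewiLukeMali2009}, since the hypothesis $Ax^* \in D$ together with prox-regularity of $D$ at $Ax^*$ is precisely what that result requires. I will, however, spell out the two ingredients that together give the conclusion, for clarity.

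First, I would recall the standing consequence of prox-regularity recorded in \cite[Theorem~1.3]{PoliRockThib2000} and already invoked in the excerpt: at a point $\bar y = Ax^*\in D$ where $D$ is prox-regular, there is an $\varepsilon>0$ so that $\mathrm{Proj}_D$ is single-valued on $B(\bar y,\varepsilon)$. It is further known (and this is what \cite[Proposition~8.1]{LewiLukeMali2009} packages) that on such a neighborhood the single-valued projection $\mathrm{Proj}_D$ is in fact locally Lipschitz.

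Second, I would combine this with the formula
\[
\partial\!\left(\tfrac12 d^2(\cdot,D)\right)(y) \;=\; y-\mathrm{Proj}_D(y),
\]
which is \eqref{tocite} in the excerpt and holds for every $y$. On the neighborhood $B(\bar y,\varepsilon)$, the right-hand side is a singleton depending locally Lipschitz continuously on $y$. A proper lower semicontinuous function whose limiting subdifferential is single-valued and continuous on a neighborhood of a point is continuously differentiable there (see \cite[Exercise~8.8]{RockWets1998} combined with the fact that $\tfrac12 d^2(\cdot,D)$ is locally Lipschitz as a distance-type function), so $y\mapsto \tfrac12 d^2(y,D)$ is $C^1$ on $B(\bar y,\varepsilon)$ with gradient $\nabla(\tfrac12 d^2(\cdot,D))(y) = y-\mathrm{Proj}_D(y)$. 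Local Lipschitz continuity of this gradient then follows from local Lipschitz continuity of $\mathrm{Proj}_D$ near $\bar y$, which is exactly the conclusion sought.

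There is essentially no obstacle beyond lining up these two pieces; the only point worth flagging is that the local Lipschitz property of $\mathrm{Proj}_D$ on a prox-regular set is a nontrivial fact, and I would simply quote \cite[Proposition~8.1]{LewiLukeMali2009} (or equivalently the original result in \cite{PoliRockThib2000}) rather than reprove it.
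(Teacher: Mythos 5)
Your proposal is correct and follows essentially the same route as the paper, whose entire proof is the single sentence that the result follows directly from \cite[Proposition 8.1]{LewiLukeMali2009}. Your additional unpacking of why that citation delivers the conclusion (single-valued, locally Lipschitz projection near a point of prox-regularity, combined with the subdifferential formula \eqref{tocite}) is accurate and merely makes explicit what the paper leaves to the reference.
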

\begin{proof}
The desired result follows directly from \cite[Proposition 8.1]{LewiLukeMali2009}.
\qed\end{proof}

\begin{proposition}
Consider the split feasibility problem \eqref{eq 1.1} and suppose that $D=\bigcup_{i=1}^mD_i$, where each $D_i$, $i=1,\ldots,m$, is closed and convex. If $I(Ax^*):=\{i:d(Ax^*,D)=d(Ax^*,D_i)\}$ is a singleton set, then $y\mapsto \frac{1}{2}d^2(y,D)$ is continuously differentiable at $Ax^*$ with locally Lipschitz gradient.
\end{proposition}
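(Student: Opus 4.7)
The plan is to reduce locally to the classical smoothness result for the squared distance to a single closed convex set, by exploiting the singleton structure of $I(Ax^*)$.

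First, since $D=\bigcup_{i=1}^m D_i$, we have the pointwise identity
\[
d(y,D)=\min_{1\le i\le m}d(y,D_i)\quad\text{for all } y\in\R^m,
\]
so that $\frac12 d^2(y,D)=\min_i \frac12 d^2(y,D_i)$. Let $i_0$ be the unique index in $I(Ax^*)$, so that $d(Ax^*,D_{i_0})<d(Ax^*,D_j)$ for every $j\neq i_0$. Each function $y\mapsto d(y,D_j)$ is continuous (in fact $1$-Lipschitz), so there exists a neighborhood $U$ of $Ax^*$ on which the strict inequalities $d(y,D_{i_0})<d(y,D_j)$ persist for all $j\neq i_0$. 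Consequently
\[
\frac12 d^2(y,D)=\frac12 d^2(y,D_{i_0})\quad\text{for all } y\in U.
\]

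Next I would invoke the standard fact that, for a closed convex set $\Omega$, the function $y\mapsto \frac12 d^2(y,\Omega)$ is continuously differentiable on $\R^m$ with $\nabla\bigl(\tfrac12 d^2(\cdot,\Omega)\bigr)(y)=y-{\rm Proj}_\Omega(y)$, and this gradient is globally $1$-Lipschitz since the projection onto a closed convex set is nonexpansive. Applied to $\Omega=D_{i_0}$, this yields continuous differentiability of $\tfrac12 d^2(\cdot,D_{i_0})$ on all of $\R^m$ with a globally Lipschitz gradient. Combined with the local identity on $U$, this immediately gives that $y\mapsto \tfrac12 d^2(y,D)$ is continuously differentiable at $Ax^*$ with locally Lipschitz gradient.

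There is no serious obstacle here; the only point requiring care is the continuity-based passage from $i_0$ being uniquely achieving at $Ax^*$ to $i_0$ being uniquely achieving throughout a neighborhood, which follows at once from the continuity of the finite family $\{d(\cdot,D_j)\}_{j=1}^m$ and the strict inequality at $Ax^*$.
\qed
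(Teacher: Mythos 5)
Your proof is correct and follows essentially the same route as the paper: write $\frac12 d^2(\cdot,D)$ as the pointwise minimum of the $\frac12 d^2(\cdot,D_i)$, use the strict gap at $Ax^*$ plus continuity to show the minimum is attained only at $i_0$ throughout a neighborhood, and then invoke the smoothness of the squared distance to the single closed convex set $D_{i_0}$. The only (harmless) difference is that you make the gradient formula $y-{\rm Proj}_{D_{i_0}}(y)$ and its global $1$-Lipschitz continuity explicit, whereas the paper simply cites the smoothness of $\frac12 d^2(\cdot,D_i)$ for convex $D_i$.
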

\begin{proof}
It is easy to see that $\frac12d^2(y,D)=\min_{1\le i\le m}\frac12 d^2(y,D_i)$ for all $y\in \R^m$. Moreover, for each $i$, the function $y\mapsto \frac12 d^2(y,D_i)$ is continuously differentiable with Lipschitz gradient because $D_i$ is closed and convex. Write $y^* = Ax^*$ for notational simplicity and note that $I(y^*) = \{i_0\}$ for some $i_0\in \{1,\ldots,m\}$ by assumption.

From the definition of $I(y^*)$ we have $\min_{i\notin I(y^*)}\frac12 d^2(y^*,D_i)>\frac12d^2(y^*,D)$. By continuity, it then holds that for all $y$ sufficiently close to $y^*$, we have
\[
\min_{i\notin I(y^*)}\frac12 d^2(y,D_i)>\frac12d^2(y,D).
\]
Hence $I(y)=\{i_0\}$ for all $y$ sufficiently close to $y^*$. Thus, it holds that $\frac12d^2(y,D) = \frac12d^2(y,D_{i_0})$ locally around $y^*$. Consequently, the function $y\mapsto \frac12d^2(y,D)$ is continuously differentiable at $y^*$ with locally Lipschitz gradient. This completes the proof.
\qed\end{proof}

\subsection{Local convergence behavior}

In this section, we study the local convergence rate of the sequence $\{x^t\}$ generated by ${\sf SpFeas}_{{\rm DC}_{\sf ls}}$ with $M =0$.
Local convergence rates of various first-order methods have been widely studied recently and they are usually analyzed based on the so-called KL exponent of a certain potential function; see, for example, \cite{AttoBolt2009,AttoBoltRedoSoub2010,LiPong2018}.
Here, our analysis uses the $F$ in \eqref{eq 1.2} as the potential function and makes use of the assumption that $F$ is a KL function with exponent $\theta\in [0,1)$.
We first show in the proposition below that this latter assumption holds when $C$ and $D$ are subanalytic sets and $C$ is bounded (see \cite[Definition 2.1 (ii)]{BoltDaniLewi2007} for the definition of subanalytic sets).
\begin{proposition}
  Consider the split feasibility problem \eqref{eq 1.1} and let $F$ be defined in \eqref{eq 1.2}. If $C$ and $D$ are subanalytic sets and $C$ is compact, then $F$ is a KL function with exponent $\theta\in [0,1)$.
\end{proposition}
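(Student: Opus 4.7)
My strategy is to reduce the claim to the standard fact that a proper closed subanalytic function with compact domain satisfies the KL property with a uniform exponent $\theta \in [0,1)$, a result going back to \cite{BoltDaniLewi2007}. The plan therefore has three steps: (i) verify that $F$ in \eqref{eq 1.2} is a subanalytic function, (ii) invoke the pointwise KL-with-exponent result for subanalytic functions at each point of ${\rm dom}\,\partial F$, and (iii) upgrade pointwise exponents to a single uniform exponent by compactness of ${\rm dom}\, F = C$.

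For step (i), I would argue that each piece of the decomposition in \eqref{eq 3.7} is subanalytic on a neighborhood of $C$. The indicator $\delta_C$ is subanalytic because its epigraph is $C\times\mathbb{R}_+$, which is subanalytic when $C$ is. For the smooth term, since $C$ is compact, $A(C)$ is bounded, so I may pick a radius $R$ large enough that for every $x\in C$ the set ${\rm Proj}_D(Ax)$ lies in the ball $B(0,R)$; then $d^2(Ax,D) = d^2(Ax, D\cap B(0,R))$ for every $x\in C$. The set $D\cap B(0,R)$ is bounded subanalytic, so its distance function is globally subanalytic (its graph is a subanalytic subset of $\mathbb{R}^m\times\mathbb{R}$). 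Composition with the linear map $A$ and squaring preserve subanalyticity, and the sum of two subanalytic functions, one of which has closed subanalytic domain, is subanalytic on a neighborhood of ${\rm dom}\, F = C$. Hence $F$ is a proper closed subanalytic function with compact domain.

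For steps (ii) and (iii), I would cite the result from \cite[Section~4.3]{AttoBoltRedoSoub2010} (via \cite{BoltDaniLewi2007}) that at every $\bar x\in{\rm dom}\,\partial F$ there exist a neighborhood $U_{\bar x}$ of $\bar x$ and an exponent $\theta_{\bar x}\in[0,1)$ such that the KL inequality holds on $U_{\bar x}$ with desingularizing function $\psi(a) = c_{\bar x}\, a^{1-\theta_{\bar x}}$. Since ${\rm dom}\,\partial F\subseteq C$ is relatively compact, I would pick a finite subcover of the closure of ${\rm dom}\,\partial F$ by such neighborhoods $U_{\bar x_1},\ldots,U_{\bar x_N}$, set $\theta := \max_{1\le i\le N}\theta_{\bar x_i}\in[0,1)$, and observe that $\psi(a) = c\, a^{1-\theta}$ for a suitable constant $c>0$ works as the desingularizing function at every point in ${\rm dom}\,\partial F$ (after possibly shrinking $s$); note that using a larger exponent $1-\theta$ is harmless since it only weakens the KL inequality, so one only needs to rescale $c$ on each piece of the cover.

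\textbf{Main obstacle.} The subtle step is (i): subanalyticity of $d^2(\cdot,D)$ is a purely local property, and the distance function to an unbounded subanalytic set need not be \emph{globally} subanalytic. Compactness of $C$ is used precisely here, to reduce to the distance function of a bounded subanalytic set, where global subanalyticity is standard. The rest is bookkeeping with known preservation rules for subanalyticity and a compactness argument to uniformize the exponent.
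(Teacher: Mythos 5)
Your step (i) is sound and is essentially the paper's argument in a slightly different dress: the paper also reduces everything to subanalyticity of $x\mapsto \frac12 d^2(Ax,D)$ (citing \cite[p.~597]{facchinei2003finite} for the distance function to a subanalytic set and for preservation under composition/intersection), writes the graph of $F$ as $\{(x,\frac12 d^2(Ax,D)):x\in\R^n\}\cap(C\times\R)$, and uses compactness of $C$ to conclude that this graph is a \emph{compact} subanalytic set, so that $F$ is \emph{globally} subanalytic. Your truncation of $D$ to $D\cap B(0,R)$ is a reasonable alternative way of handling the same local-versus-global issue you correctly flag as the "main obstacle".

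The genuine gap is in your step (iii). The finite-subcover uniformization does not work here, because the KL inequality at $\bar x_i$ is anchored at the reference value $F(\bar x_i)$: it bounds $d(0,\partial F(x))$ below by $c_i\bigl(F(x)-F(\bar x_i)\bigr)^{\theta_i}$ only for $x$ with $0<F(x)-F(\bar x_i)<s_i$. For a point $\bar y\in U_{\bar x_i}\cap{\rm dom}\,\partial F$ with $F(\bar y)\neq F(\bar x_i)$ --- and $F$ is certainly not constant on ${\rm dom}\,\partial F$ in general --- this tells you nothing about the inequality required at $\bar y$. In particular, if $F(\bar y)<F(\bar x_i)$, then for $x$ near $\bar y$ with $F(\bar y)<F(x)<F(\bar x_i)$ the inequality at $\bar x_i$ is vacuous, and the exponent $\theta_{\bar y}$ furnished by the pointwise result at $\bar y$ is not controlled by $\max_i\theta_{\bar x_i}$. (Your remark that "a larger exponent only weakens the inequality" is correct but addresses a different issue; it does not repair the mismatch of reference values.) This is precisely why the standard uniformized KL lemmas in the literature assume the function is \emph{constant} on the compact set being covered. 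The paper avoids the covering argument altogether: having shown $F$ is globally subanalytic with compact graph, it invokes \cite[Corollaries~9 and 16]{BolteDaniLewiShiota07}, which supply the exponent form of the KL inequality for such functions directly. To close your gap you would need to either cite a result of that type or restrict the uniformization to a level set of $F$.
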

\begin{proof}
  First of all, since $D$ is subanalytic, we see from p3 and p5 of \cite[page 597]{facchinei2003finite} that the squared distance function $x\mapsto \frac{1}{2}d^2(Ax,D) $ is a subanalytic function, which means that its graph, given by $  \{(x,\frac{1}{2}d^2(Ax,D)):\; x\in\mathbb{R}^n \}$, is a subanalytic set. Next, notice that the graph of $F$ in \eqref{eq 1.2} is given by
\begin{equation*}
\{(x, \textstyle\frac{1}{2}d^2(Ax,D)):\; x\in C \} = \{(x,\frac{1}{2}d^2(Ax,D)):\; x\in\mathbb{R}^n\}\cap (C\times \R).
\end{equation*}
In addition, the set $ C\times \R $ is subanalytic as $ C $ is subanalytic, thanks to p2 of \cite[page 597]{facchinei2003finite}. In view of these, we deduce from p1 of \cite[page 597]{facchinei2003finite} that the graph of $F$ is also subanalytic. In addition, since $C$ is compact, and $x\mapsto \frac{1}{2}d^2(Ax,D)$ is continuous, the graph of $F$ is also compact. Thus, $ F $ is a globally subanalytic function (see \cite[Definition~2.2]{BoltDaniLewi2007} and discussions therein). In view of \cite[Corollary~9]{BolteDaniLewiShiota07} and \cite[Corollary~16]{BolteDaniLewiShiota07}, we conclude that $ F $ is a KL function with exponent $ \theta\in[0,1) $. This completes the proof.
\end{proof}
We now state in the following theorem our local convergence result for ${\sf SpFeas}_{{\rm DC}_{\sf ls}}$ with $M=0$ based on the KL exponent $\theta$ of the $F$ in \eqref{eq 1.2}. The proof is standard and follows a similar line of arguments as in \cite[Theorem~2]{AttoBolt2009}, is thus omitted for brevity.

\begin{theorem}[{{\bf Local convergence rate}}]\label{thm 5.1}
Consider the split feasibility problem \eqref{eq 1.1} with $C^{\infty}\cap A^{-1}D^{\infty}=\{0\}$
and let the function $F$ in \eqref{eq 1.2} be a KL function with exponent $\theta\in [0,1)$.
Let $\{x^t\}$ be the sequence generated by ${\sf SpFeas}_{{\rm DC}_{\sf ls}}$ with $M=0$ and let $x^*$ be its accumulation point. If $y\mapsto \frac{1}{2}d^2(y,D)$ is continuously differentiable at $Ax^*$ with locally Lipschitz gradient, then the following statements hold:
\begin{enumerate}[{\rm (i)}]
  \item If $\theta=0$, then there exists ${\bar t}\geq0$ such that $x^t= x^*$ whenever $t\geq {\bar t}$;
  \item If $\theta\in(0,\frac{1}{2}]$, then there exist ${\bar t}\geq0, {\bar d}>0$ and $\sigma\in (0,1)$ such that $\Vert x^t-x^*\Vert\leq {\bar d}\sigma^t$ for all $t\geq {\bar t}$;
  \item If $\theta\in(\frac{1}{2},1)$, then there exist ${\bar t}\geq0$ and ${\bar d}>0$ such that $\Vert x^t-x^*\Vert\leq {\bar d}t^{-\frac{1-\theta}{2\theta-1}}$ for all $t\geq {\bar t}$.
\end{enumerate}
\end{theorem}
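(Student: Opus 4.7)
The plan is to follow the standard KL-exponent recipe as in \cite{AttoBolt2009,AttoBoltSvai2013}, which rests on three ingredients: (a) a sufficient decrease inequality along $\{x^t\}$, (b) a bound on $d(0,\partial F(x^{t+1}))$ in terms of $\|x^{t+1}-x^t\|$, and (c) the KL inequality at $x^*$ with exponent $\theta$. Theorem~\ref{thm 4.1} gives $x^t\to x^*$, so (c) is applicable at $x^{t+1}$ for all large $t$. Ingredient (a) is immediate: since $x^t,x^{t+1}\in C$, the values $F(x^t)$ and $F(x^{t+1})$ coincide with $\tfrac12 d^2(Ax^t,D)$ and $\tfrac12 d^2(Ax^{t+1},D)$, so the linesearch condition \eqref{eq 3.6} with $M=0$ gives
\begin{equation*}
F(x^{t+1})\le F(x^t)-\tfrac{c}{2}\|x^{t+1}-x^t\|^2.
\end{equation*}

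Ingredient (b) is the heart of the argument. I would first use the hypothesis that $y\mapsto\tfrac12 d^2(y,D)$ is $C^1$ at $Ax^*$ with locally Lipschitz gradient, combined with \eqref{tocite}, to deduce that ${\rm Proj}_D$ is single-valued and locally Lipschitz on a neighborhood of $Ax^*$. Hence $x\mapsto\tfrac12 d^2(Ax,D)$ is $C^1$ near $x^*$ with locally Lipschitz gradient, and the sum rule (for a $C^1$ function plus $\delta_C$) yields
\begin{equation*}
\partial F(x)=A^T\bigl(Ax-{\rm Proj}_D(Ax)\bigr)+N_C(x)
\end{equation*}
for $x\in C$ sufficiently close to $x^*$. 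Combining this identity with the optimality condition \eqref{eq 3.2} of the projection step produces the explicit subgradient
\begin{equation*}
v^{t+1}=\bar L_t(x^t-x^{t+1})+A^TA(x^{t+1}-x^t)+A^T\bigl(\eta^t-{\rm Proj}_D(Ax^{t+1})\bigr)\in\partial F(x^{t+1}).
\end{equation*}
Using $\sup_t\bar L_t<\infty$ from Lemma~\ref{lemma 3.2}, together with local Lipschitzness of ${\rm Proj}_D$ applied to the last term, this delivers $\|v^{t+1}\|\le K\|x^{t+1}-x^t\|$ for some constant $K>0$ and all large $t$.

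With (a), (b), (c) in place, the three cases separate cleanly. For $\theta=0$, the KL inequality forces $\|v^{t+1}\|\ge c>0$ whenever $F(x^{t+1})>F(x^*)$, which contradicts $\|v^{t+1}\|\le K\|x^{t+1}-x^t\|\to 0$; hence $F(x^{t+1})=F(x^*)$ eventually, and (a) then forces $x^{t+1}=x^t$ for all large $t$. For $\theta\in(0,1)$, setting $r_t:=F(x^t)-F(x^*)\ge 0$, combining (a), (b), and $c\,r_{t+1}^{\theta}\le\|v^{t+1}\|$ yields a recursion of the form $r_{t+1}^{2\theta}\le\tilde C(r_t-r_{t+1})$, and a standard lemma gives linear decay of $r_t$ when $\theta\in(0,1/2]$ and $r_t=O(t^{-1/(2\theta-1)})$ when $\theta\in(1/2,1)$. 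To convert these into rates on $\|x^t-x^*\|$, I would invoke the telescoping bound $\|x^T-x^*\|\le\sum_{t\ge T}\|x^{t+1}-x^t\|$ and control the tail using the desingularizing function $\psi(s)\sim s^{1-\theta}$ of the KL property together with (a) and (b), exactly as in \cite[Theorem~2]{AttoBolt2009} and \cite{AttoBoltSvai2013}.

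The main obstacle is ingredient (b): it requires carefully combining the nonconvex sum rule for $\partial F$ near $x^*$ (which only applies thanks to the local $C^1$ hypothesis on $\tfrac12 d^2(\cdot,D)$) with the projection optimality condition, and then controlling the cross term $\eta^t-{\rm Proj}_D(Ax^{t+1})$ via local Lipschitzness of ${\rm Proj}_D$. Once (b) is secured, the rest is the routine KL recursion machinery, and the proof is omitted for brevity.
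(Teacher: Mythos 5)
Your proposal is correct and follows essentially the same route the paper intends: the paper omits the proof, pointing to \cite[Theorem~2]{AttoBolt2009}, and the three ingredients you isolate (the sufficient decrease from \eqref{eq 3.6} with $M=0$, the relative-error bound $d(0,\partial F(x^{t+1}))\le K\|x^{t+1}-x^t\|$ built from the projection optimality condition together with the local $C^1$/Lipschitz-gradient hypothesis on $\tfrac12 d^2(\cdot,D)$, and the KL inequality with exponent $\theta$) are exactly the ones the paper's appendix assembles for Theorem~\ref{thm 4.1} before invoking the standard recursion. Your handling of ingredient (b), including the identification $\partial F(x)=A^T(Ax-{\rm Proj}_D(Ax))+N_C(x)$ near $x^*$ via \cite[Exercise~8.8(c)]{RockWets1998} and the single-valuedness and local Lipschitzness of ${\rm Proj}_D$ inherited from the gradient hypothesis, is the same device used there.
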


From Theorem \ref{thm 5.1}, we know that if the function $F$ in \eqref{eq 1.2} satisfies the KL property with exponent $\frac{1}{2}$ and a certain differentiability assumption holds at an accumulation point of $\{x^t\}$, then the sequence $\{x^t\}$ generated is locally linearly convergent. We next give sufficient conditions on $C$ and $D$ in \eqref{eq 1.1} so that the $F$ in \eqref{eq 1.2} satisfies the KL property with exponent $\frac{1}{2}$.
Our first result concerns polyhedrality.
\begin{theorem}
Consider the split feasibility problem \eqref{eq 1.1} and let $F$ be defined in \eqref{eq 1.2}. If $C$ and $D$ are both unions of polyhedral sets, then $F$ is a KL function with exponent $\frac{1}{2}$.
\end{theorem}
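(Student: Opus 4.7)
The plan is to exhibit $F$ as the pointwise minimum of finitely many proper closed convex piecewise linear-quadratic (PLQ) functions, and then invoke two facts available in the KL-exponent calculus of \cite{LiPong2018}: proper closed convex PLQ functions have KL exponent $\tfrac12$ everywhere on $\mathrm{dom}\,\partial$, and the exponent $\tfrac12$ is preserved under finite pointwise minimum, under mild local conditions.

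Concretely, I would write $C = \bigcup_{i=1}^p C_i$ and $D = \bigcup_{j=1}^q D_j$ with each $C_i$ and $D_j$ polyhedral, and for each pair $(i,j)$ set
\[
F_{ij}(x) := \tfrac{1}{2} d^2(Ax, D_j) + \delta_{C_i}(x).
\]
A direct computation gives $F(x) = \min_{(i,j)} F_{ij}(x)$ for every $x \in \R^n$: both sides equal $+\infty$ off $C$, and on $C$ one uses $d^2(Ax, D) = \min_j d^2(Ax, D_j)$ together with $\delta_C = \min_i \delta_{C_i}$. Since $D_j$ is polyhedral convex, $y \mapsto \tfrac{1}{2} d^2(y, D_j)$ is convex PLQ; this PLQ structure is preserved under precomposition with the linear map $A$ and under addition of the polyhedral indicator $\delta_{C_i}$ (cf.\ \cite[Proposition~10.22]{RockWets1998}), so each $F_{ij}$ is a proper closed convex PLQ function. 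Since proper closed convex PLQ functions are known to satisfy a Luo--Tseng type error bound, one obtains (as in \cite{LiPong2018}) that each $F_{ij}$ is a KL function with exponent $\tfrac12$.

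It remains to transfer the exponent $\tfrac12$ from $\{F_{ij}\}$ to $F = \min_{(i,j)} F_{ij}$. This is the main technical obstacle, and the step where one has to be careful. At each $\bar x \in \mathrm{dom}\,\partial F$ one must identify the active index set $I(\bar x) := \{(i,j) : F_{ij}(\bar x) = F(\bar x)\}$, observe that the inactive $F_{ij}$'s exceed $F(\bar x)$ by a positive gap on a neighborhood of $\bar x$ (by lower semicontinuity of the finitely many $F_{ij}$'s and finiteness of $I(\bar x)$), and use the inclusion of $\partial F$ in the union of subdifferentials of active $F_{ij}$'s that follows from the minimum structure. Once these local reductions are in place, applying the calculus rule for the KL exponent under a finite minimum in \cite{LiPong2018} transfers the exponent $\tfrac12$ from each active $F_{ij}$ to $F$, completing the proof.
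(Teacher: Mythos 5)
Your proposal is correct and follows essentially the same route as the paper: the same decomposition $F=\min_{i,j}F_{ij}$ with $F_{ij}=\frac12 d^2(A\cdot,D_j)+\delta_{C_i}$, the same observation that each $F_{ij}$ is piecewise linear-quadratic, and the same reliance on the KL-exponent calculus of \cite{LiPong2018}. The only organizational difference is that the paper unfolds each $F_{ij}$ into its canonical representation as a minimum of quadratics plus polyhedral indicators and then cites \cite[Corollary~5.2]{LiPong2018} once, whereas you establish exponent $\frac12$ for each convex PLQ piece and then invoke the finite-minimum rule separately; both steps live in the same toolkit, so this is a presentational rather than a substantive difference.
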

\begin{proof}
Let $D=\bigcup_{i=1}^kD_i$ and $C=\bigcup_{j=1}^\ell C_j$, where $D_1,\ldots,D_k$ and $C_1,\ldots,C_\ell$ are all polyhedral sets. Then one can see that
\begin{equation}\label{defF0}
F(x)=\min_{\substack{1\le i\le k\\ 1\le j\le \ell}}\underbrace{\frac{1}{2}d^2(Ax,D_i)+\delta_{C_{j}}(x)}_{F_{i,j}(x)}.
\end{equation}
Since $D_i$ and $C_j$ are polyhedral for each $i$ and $j$, we conclude from \cite[Exercise 10.22]{RockWets1998} and \cite[Example 12.31]{RockWets1998} that $F_{i,j}$ is piecewise linear-quadratic for each $i$ and $j$. Using the definition of piecewise linear-quadratic function, we can further rewrite $F_{i,j}$ as follows:
\begin{equation}\label{defF1}
F_{i,j}(x)=\min_{1\leq \nu\leq l_{i,j}}\left\{\frac{1}{2}x^TG_{i,j,\nu}x+\alpha^T_{i,j,\nu}x+\beta_{i,j,\nu}+\delta_{P_{i,j,\nu}}(x)\right\},
\end{equation}
where $G_{i,j,\nu}\in \R^{n\times n}$ is symmetric, $\alpha_{i,j,\nu}\in {\mathbb R}^n$, $\beta_{i,j,\nu}\in \mathbb{R}$, and $P_{i,j,\nu}$ is polyhedral for each $\nu\in \{1,\ldots,l_{ij}\}$. The desired
conclusion now follows from \eqref{defF0}, \eqref{defF1} and \cite[Corollary 5.2]{LiPong2018}. This completes the proof.
\qed\end{proof}

Our next result concerns a certain kind of regularity condition, defined as follows.
\begin{definition}[{{\bf Linearly regular intersection with respect to $A$}}]\label{def:linreg}
  Consider the split feasibility problem \eqref{eq 1.1}. We say that the pair of sets $\{C,A^{-1}D\}$ has a linearly regular intersection with respect to $A$ at a point $x^*\in C\cap A^{-1}D$ if the following implication holds:
  \begin{equation}
  \begin{aligned}\label{eq 5.3}
A^Tw^*+v^*=0&\mbox{ for some }w^*\in N_D(Ax^*)\mbox{ and }v^*\in N_C(x^*),\\
&\Longrightarrow w^*=0 \mbox{ and } v^*=0.
\end{aligned}
\end{equation}
\end{definition}

The concept of linearly regular intersection defined above for split feasibility problems is a generalization of the corresponding property for classical feasibility problems. Recall from \cite[Section 2]{LewiLukeMali2009} that
the pair of nonempty closed sets $\{C_1,C_2\}$ has linearly regular intersection at a point $x^*\in C_1\cap C_2$
if the following implication holds:
\begin{equation*}
\begin{aligned}
v_1+v_2=0&\mbox{ for some }v_1\in N_{C_1}(x^*)\mbox{ and }v_2\in N_{C_2}(x^*),\\
&\Longrightarrow v_1 = v_2 = 0.
\end{aligned}
\end{equation*}
It was proved in \cite[Theorem 5.16]{LewiLukeMali2009} that if $\{C_1,C_2\}$ has linearly regular intersection at an $x^*\in C_1\cap C_2$ and at least one of these two sets is super-regular at $x^*$ (see \cite[Definition~4.3]{LewiLukeMali2009}), then the sequence generated by the alternating projection algorithm for finding a point in $C_1\cap C_2$ is {\em locally linear convergent} as long as the algorithm was initialized sufficiently close to $x^*$. Here, we will show in Theorem~\ref{thm:regimpKL} below that \eqref{eq 5.3} has a similar implication on split feasibility problem \eqref{eq 1.1}: under \eqref{eq 5.3}, the function $F$ defined in \eqref{eq 1.2} has the KL property with exponent $\frac{1}{2}$ at the point $x^*\in C\cap A^{-1}D$.

We start with an auxiliary lemma.
\begin{lemma}\label{lemma 5.1}
Consider the split feasibility problem \eqref{eq 1.1}. Let $x^*\in C\cap A^{-1}D$ and suppose that the pair of sets $\{C,A^{-1}D\}$ has a linearly regular intersection with respect to $A$ at $x^*$, i.e., \eqref{eq 5.3} holds.
Then the following statements hold:
\begin{enumerate}[{\rm (i)}]
\item There exist $\gamma_1>0$ and $\epsilon_1>0$ such that
\begin{equation}\label{eq 5.1}
\Vert A^T(Ax-\eta)\Vert\geq\gamma_1\Vert Ax-\eta\Vert
\end{equation}
whenever $x\in B(x^*,\epsilon_1)$ and $\eta\in {\rm Proj}_D(Ax)$.

\item There exist $\gamma_2\in [0,1)$ and $\epsilon_2>0$ such that
\begin{equation}\label{eq 5.4}
v^TA^T(Ax-\eta)\geq -\gamma_2\Vert A^T(Ax-\eta)\Vert \Vert v\Vert
\end{equation}
whenever $x\in B(x^*,\epsilon_2)\cap C$, $\eta\in {\rm Proj}_D(Ax)$ and $v\in N_C(x)$.
\end{enumerate}
\end{lemma}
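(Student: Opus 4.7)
The plan is to prove both statements by contradiction, exploiting the closed-graph (outer semicontinuity) property of the limiting normal-cone mappings $N_D$ and $N_C$ together with the linear-regularity hypothesis \eqref{eq 5.3}. A preliminary remark that is used throughout: because $x^*\in A^{-1}D$ we have $Ax^*\in D$, so along any sequence $x^t\to x^*$ with $\eta^t\in {\rm Proj}_D(Ax^t)$ we have $\|Ax^t-\eta^t\|=d(Ax^t,D)\le\|Ax^t-Ax^*\|\to 0$, hence $\eta^t\to Ax^*$. Also \eqref{normalcone} gives $Ax^t-\eta^t\in \hat N_D(\eta^t)\subseteq N_D(\eta^t)$, and every positive rescaling of this vector remains in the cone $N_D(\eta^t)$.

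For (i), suppose the conclusion fails. Then there exist $x^t\to x^*$ and $\eta^t\in {\rm Proj}_D(Ax^t)$ with $\|A^T(Ax^t-\eta^t)\|<\tfrac{1}{t}\|Ax^t-\eta^t\|$; the strict inequality forces $Ax^t\ne\eta^t$, so I define the unit vector $w^t:=(Ax^t-\eta^t)/\|Ax^t-\eta^t\|\in N_D(\eta^t)$ with $\|A^T w^t\|<1/t$. A convergent subsequence $w^t\to w^*$ then has $\|w^*\|=1$, $A^T w^*=0$, and outer semicontinuity of $N_D$ combined with $\eta^t\to Ax^*$ places $w^*\in N_D(Ax^*)$. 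Taking $v^*=0\in N_C(x^*)$ in \eqref{eq 5.3} forces $w^*=0$, the desired contradiction.

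For (ii), suppose the conclusion fails. Then for each $t\in\mathbb N$ there exist $x^t\in B(x^*,1/t)\cap C$, $\eta^t\in {\rm Proj}_D(Ax^t)$ and $v^t\in N_C(x^t)$ with
$$(v^t)^T A^T(Ax^t-\eta^t)<-(1-1/t)\,\|A^T(Ax^t-\eta^t)\|\,\|v^t\|.$$
The strict inequality forces $v^t\ne 0$ and $A^T(Ax^t-\eta^t)\ne 0$, so I may rescale to $\|v^t\|=1$ and, for $t$ large enough that $x^t\in B(x^*,\epsilon_1)$, apply (i) to the unit vector $\tilde w^t:=(Ax^t-\eta^t)/\|Ax^t-\eta^t\|\in N_D(\eta^t)$ to get $\|A^T\tilde w^t\|\ge\gamma_1$. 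Extracting a common subsequence along which $v^t\to v^*$ and $\tilde w^t\to\tilde w^*$ yields unit vectors with $v^*\in N_C(x^*)$, $\tilde w^*\in N_D(Ax^*)$, and $\|A^T\tilde w^*\|\ge\gamma_1>0$. Dividing the failed inequality by $\|A^T(Ax^t-\eta^t)\|\,\|v^t\|$ and passing to the limit gives $(v^*)^T\bigl(A^T\tilde w^*/\|A^T\tilde w^*\|\bigr)\le -1$, which by the equality case of Cauchy--Schwarz forces $A^T\tilde w^*=-\|A^T\tilde w^*\|\,v^*$. Setting $w^*:=\tilde w^*\in N_D(Ax^*)$ and $v^{**}:=\|A^T\tilde w^*\|\,v^*\in N_C(x^*)$ I obtain $A^T w^*+v^{**}=0$ with $w^*\ne 0$, contradicting \eqref{eq 5.3}.

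The delicate step is the simultaneous extraction of limits in (ii) while preserving the cosine inequality: without the lower bound $\|A^T\tilde w^t\|\ge\gamma_1$ from (i), the normalization by $\|A^T(Ax^t-\eta^t)\|$ could degenerate and the limiting geometry would be lost, so (i) is not just a companion result but an indispensable ingredient for (ii). The other piece of bookkeeping that needs care is the placement of the subsequential limits in the correct normal cones: $\tilde w^t\in N_D(\eta^t)$ with $\eta^t\to Ax^*$, and $v^t\in N_C(x^t)$ with $x^t\to x^*$, so that outer semicontinuity deposits the limits at the base points $Ax^*$ and $x^*$, which is exactly what \eqref{eq 5.3} is stated at.
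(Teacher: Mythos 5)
Your proof is correct and follows essentially the same route as the paper's: both parts argue by contradiction, normalize the violating sequences, use outer semicontinuity of the normal cone mappings (with $\eta^t\to Ax^*$) to place the limits in $N_D(Ax^*)$ and $N_C(x^*)$, and then invoke \eqref{eq 5.3}, with part (i) supplying the lower bound that keeps the normalization in part (ii) from degenerating. The only cosmetic difference is in (ii): the paper normalizes $Ax^t-\eta^t$ by $\Vert A^T(Ax^t-\eta^t)\Vert$ and concludes via $\Vert A^Tp^*+q^*\Vert^2\le 0$, whereas you normalize to a unit vector in $\R^m$ and use the equality case of Cauchy--Schwarz together with the cone property of $N_C(x^*)$ --- the same computation in different clothing.
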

\begin{proof}
We first prove (i). Suppose to the contrary that (i) does not hold. Then there exist $\{x^t\}$ and $\{\eta^t\}$ satisfying $x^t\to x^*$, $\eta^t\in {\rm Proj}_D(Ax^t)$
and
\begin{equation}\label{eq 5.2}
\Vert A^T(Ax^t-\eta^t)\Vert<\frac{1}{t}\Vert Ax^t-\eta^t\Vert
\end{equation}
for all $t\geq 1$. In particular, we have $Ax^t-\eta^t\neq 0$ for all $t\ge 1$. Moreover, by passing to a subsequence if necessary, we may assume without loss of generality that
\begin{equation}\label{unitnorm}
\frac{Ax^t - \eta^t}{\|Ax^t - \eta^t\|}\to q^*
\end{equation}
for some $q^*$ with $\|q^*\|=1$.

Now, since $\eta^t\in {\rm Proj}_D(Ax^t)$ and $Ax^*\in D$, we have for all $t\ge 1$ that
\[
\Vert\eta^t-Ax^t\Vert\leq \Vert Ax^t-Ax^*\Vert.
\]
This together with $x^t\to x^*$ implies that $\eta^t\to Ax^*$. In addition, the relation $\eta^t\in {\rm Proj}_D(Ax^t)$ together with \eqref{normalcone} implies that for all $t\ge 1$,
\[
Ax^t - \eta^t \in N_D(\eta^t).
\]
Combining this with \eqref{unitnorm}, the fact that $\eta^t\to Ax^*$ and the closedness of the normal cone mapping yields $q^*\in N_D(Ax^*)$.
Next, divide both sides of \eqref{eq 5.2} by $\Vert Ax^t-\eta^t\Vert$ and pass to the limit, we obtain $A^Tq^*=0$. This together with $q^*\in N_D(Ax^*)$ and \eqref{eq 5.3} gives $q^*=0$, which is a contradiction. This proves (i).

We now prove (ii). Suppose to the contrary that (ii) does not hold. Then there exist $\{x^t\}\subseteq C$, $\{\gamma_2^t\}$, $\{\eta^t\}$ and $\{v^t\}$ satisfying $x^t\to x^*$, $\gamma^t_2\uparrow 1$, $\eta^t\in{\rm Proj}_D(Ax^t)$, $v^t\in N_C(x^t)$ and
\begin{equation}\label{eq 5.5}
{v^t}^TA^T(Ax^t-\eta^t)<-\gamma^t_2\Vert A^T(Ax^t-\eta^t)\Vert\Vert v^t\Vert
\end{equation}
for all $t\geq 1$. This implies in particular that $A^T(Ax^t-\eta^t)\neq 0$ and $v^t\neq 0$ for all $t\ge 1$. Also, observe from (i) that for all sufficiently large $t$, we have
\[
\Vert A^T(Ax^t-\eta^t)\Vert\geq \gamma_1\Vert Ax^t-\eta^t\Vert.
\]
Thus, by passing to subsequences if necessary, we may assume without loss of generality that
\begin{equation}\label{pqlimit}
\frac{Ax^t-\eta^t}{\Vert A^T(Ax^t-\eta^t)\Vert}\to p^*\mbox{ and } \frac{v^t}{\Vert v^t\Vert}\to q^*
\end{equation}
for some $p^*$ and $q^*$ so that $\|A^Tp^*\| = \|q^*\|= 1$.

Next, observe from $\eta^t\in{\rm Proj}_D(Ax^t)$, $x^t\to x^*$ and $Ax^*\in D$ that
\[
\limsup_{t\to\infty}\Vert\eta^t-Ax^t\Vert\leq \limsup_{t\to\infty}\Vert Ax^t-Ax^*\Vert = 0.
\]
Hence, $\eta^t\to Ax^*$. In addition, the relation $\eta^t\in {\rm Proj}_D(Ax^t)$ together with \eqref{normalcone} shows that for all $t\ge 1$,
\[
Ax^t - \eta^t \in N_D(\eta^t).
\]
This together with \eqref{pqlimit}, the fact that $\eta^t\to Ax^*$ and the closedness of the normal cone mapping gives $p^*\in N_D(Ax^*)$. Similarly, the relation $v^t\in N_C(x^t)$, \eqref{pqlimit}, the fact that $x^t\to x^*$ and the closedness of normal cone mapping imply $q^*\in N_C(x^*)$.
Now, divide both sides of \eqref{eq 5.5} by $\Vert A^T(Ax^t-\eta^t)\Vert\Vert v^t\Vert$ and pass to the limit,
we see that $(A^Tp^*)^Tq^*\leq -1$. Hence
\[
\Vert A^Tp^*+q^*\Vert^2=\|A^Tp^*\|^2+ \|q^*\|^2 +2(A^Tp^*)^Tq^*=2+2(A^Tp^*)^Tq^*\leq 0,
\]
where the second equality holds because of \eqref{pqlimit}. The above display shows that
$A^Tp^*+q^*=0$. This together with $p^*\in N_D(Ax^*)$, $q^*\in N_C(x^*)$ and \eqref{eq 5.3} gives $A^Tp^*=q^*=0$, which is a contradiction. This completes the proof.
\qed\end{proof}

We are now ready to show that under \eqref{eq 5.3}, the function $F$ defined in \eqref{eq 1.2} has the alleged KL property.
\begin{theorem}[{{\bf KL exponent under linear regularity}}]\label{thm:regimpKL}
Consider the split feasibility problem \eqref{eq 1.1}. Let $x^*\in C\cap A^{-1}D$ and suppose that the pair of sets $\{C,A^{-1}D\}$ has a linearly regular intersection with respect to $A$ at $x^*$, i.e., \eqref{eq 5.3} holds.
Then the function $F$ defined in \eqref{eq 1.2} has the KL property with exponent $\frac{1}{2}$ at $x^*$.
\end{theorem}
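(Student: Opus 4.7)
The plan is to combine Proposition~\ref{subF} with the two estimates in Lemma~\ref{lemma 5.1} to directly lower bound $d(0,\partial F(x))$ by a constant multiple of $d(Ax,D)$, which equals $\sqrt{2F(x)}$ on $C$ since $F(x^*)=0$. First I would fix $\epsilon := \min\{\epsilon_1,\epsilon_2\}$ and choose $s>0$ small enough that any $x$ with $F(x)-F(x^*) = F(x) < s$ is automatically in $C$ (if $x\notin C$ then $F(x)=\infty$). For such $x\in C\cap B(x^*,\epsilon)$, Proposition~\ref{subF} gives
\[
\partial F(x)\subseteq A^T(Ax-\eta)+N_C(x)\quad\text{for some }\eta\in {\rm Proj}_D(Ax),
\]
so every $w\in\partial F(x)$ can be written as $w = A^T(Ax-\eta)+v$ with $v\in N_C(x)$.

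Next, I would estimate $\|w\|^2$ using Lemma~\ref{lemma 5.1}. Writing $a:=\|A^T(Ax-\eta)\|$ and $b:=\|v\|$, part (ii) of the lemma yields
\[
\|w\|^2 = a^2 + 2v^TA^T(Ax-\eta) + b^2 \ge a^2 - 2\gamma_2 ab + b^2.
\]
Since $\gamma_2\in[0,1)$, the elementary identity $\gamma_2(a-b)^2\ge 0$ gives $a^2-2\gamma_2 ab+b^2\ge (1-\gamma_2)(a^2+b^2)\ge(1-\gamma_2)a^2$. Invoking part (i) of Lemma~\ref{lemma 5.1}, we then have $a\ge\gamma_1\|Ax-\eta\| = \gamma_1 d(Ax,D)$, so
\[
\|w\| \ge \gamma_1\sqrt{1-\gamma_2}\,d(Ax,D).
\]
Taking the infimum over $w\in\partial F(x)$ gives $d(0,\partial F(x))\ge \gamma_1\sqrt{1-\gamma_2}\,d(Ax,D)$.

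Finally, because $x\in C$ and $F(x^*)=0$, we have $F(x)-F(x^*) = \tfrac12 d^2(Ax,D)$, so $d(Ax,D) = \sqrt{2(F(x)-F(x^*))}$. Combining with the previous display gives
\[
d(0,\partial F(x)) \ge \gamma_1\sqrt{2(1-\gamma_2)}\,(F(x)-F(x^*))^{1/2},
\]
which is precisely the KL inequality at $x^*$ with exponent $\theta=1/2$ on the neighborhood $B(x^*,\epsilon)\cap\{x:0<F(x)-F(x^*)<s\}$.

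I do not anticipate any serious obstacle: Lemma~\ref{lemma 5.1} has already done the analytic work, and the remaining argument is bookkeeping together with the $\gamma_2(a-b)^2\ge 0$ trick for combining the two estimates. The only point requiring mild care is recognizing that the subdifferential inclusion in Proposition~\ref{subF} forces $w$ to decompose with the same $\eta\in {\rm Proj}_D(Ax)$ used in both applications of Lemma~\ref{lemma 5.1}, so that parts (i) and (ii) are legitimately applied simultaneously to the same triple $(x,\eta,v)$.
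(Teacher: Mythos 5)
Your proposal is correct and follows essentially the same route as the paper's proof: decompose each subgradient via Proposition~\ref{subF}, combine Lemma~\ref{lemma 5.1}(i) and (ii) through the inequality $a^2-2\gamma_2 ab+b^2\ge(1-\gamma_2)(a^2+b^2)$, and use $F(x)-F(x^*)=\tfrac12 d^2(Ax,D)$ to obtain the KL inequality with exponent $\tfrac12$ and constant $\gamma_1\sqrt{2(1-\gamma_2)}$. The only cosmetic difference is that the paper phrases the bound as an infimum over all pairs $(\eta,v)$ rather than over decompositions of a fixed $w$, which sidesteps your closing concern about matching the same $\eta$ in both estimates.
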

\begin{proof}
Let $\epsilon=\min\{\epsilon_1,\epsilon_2\}$, where $\epsilon_1$ and $\epsilon_2$ are as in Lemma~\ref{lemma 5.1}(i) and (ii) respectively. When $x\in B(x^*,\epsilon)\cap C$, $\eta\in{\rm Proj}_D(Ax)$ and $v\in N_C(x)$, we have
\begin{equation}\begin{split}\label{eq 5.8}
\Vert A^T(Ax-\eta)+v\Vert^2&=\Vert A^T(Ax-\eta)\Vert^2+\Vert v\Vert^2+2v^TA^T(Ax-\eta)\\
&\geq \Vert A^T(Ax-\eta)\Vert^2+\Vert v\Vert^2-2\gamma_2\Vert A^T(Ax-\eta)\Vert\Vert v\Vert\\
&\geq (1-\gamma_2)(\Vert A^T(Ax-\eta)\Vert^2+\Vert v\Vert^2),
\end{split}\end{equation}
where the first inequality follows from Lemma~\ref{lemma 5.1}(ii). Thus, whenever $x\in B(x^*,\epsilon)\cap C$, we have
\begin{equation*}
\begin{aligned}
&d(0,\partial F(x))\overset{\rm (a)}\geq \inf_{\eta\in {\rm Proj}_D(Ax),~v\in N_C(x)}\sqrt{\Vert A^T(Ax-\eta)+v\Vert^2}\\
&\overset{\rm (b)}\geq \sqrt{1-\gamma_2}\inf_{\eta\in {\rm Proj}_D(Ax),~v\in N_C(x)}\sqrt{\Vert A^T(Ax-\eta)\Vert^2+\Vert v\Vert^2}\\
&\geq \sqrt{1-\gamma_2}\inf_{\eta\in {\rm Proj}_D(Ax)}\Vert A^T(Ax-\eta)\Vert\overset{\rm (c)}\geq \gamma_1\sqrt{1-\gamma_2}\inf_{\eta\in {\rm Proj}_D(Ax)}\Vert Ax-\eta\Vert\\
&=\gamma_1\sqrt{2(1-\gamma_2)}(F(x)-F(x^*))^{\frac{1}{2}},
\end{aligned}
\end{equation*}
where (a) follows from Proposition~\ref{subF}, (b) follows from \eqref{eq 5.8} and (c) follows from Lemma~\ref{lemma 5.1}(i). This completes the proof.
\qed\end{proof}

\section{Factorizing completely positive matrices}\label{sec 6.1}
In this section, we consider the problem of factorizing completely positive matrices. Recall that a symmetric matrix $G\in{\mathbb R}^{n\times n}$ is completely positive if there exists a $B\in{\mathbb R}^{n\times r}_+$ for some $r\ge 1$ such that $G=BB^T$. It is known that determining whether a given matrix is completely positive is NP-hard; see, for example, \cite{DickGijb2014} and references therein.

Given a completely positive matrix $G$, the factorization problem aims at finding a $B\in {\mathbb{R}}^{n\times r}_+$ for some $r\ge 1$ so that $G = BB^T$.
In \cite{GroeDur2018}, this factorization problem was reformulated as a feasibility problem. Precisely, given a completely positive matrix $G\in \R^{n\times n}$, the authors in \cite{GroeDur2018} started with an initial factorization $G = BB^T$ for some $B\in \R^{n\times r}$; here, $B$ may not be entrywise nonnegative and $r\ge n$. They then rewrite the factorization as $G = (BQ)(BQ)^T$ for some orthogonal matrix $Q\in \R^{r\times r}$. If $r$ is chosen to be at least as large as the completely positive rank of $G$ (see \cite[Definition~2.2]{GroeDur2018}), then the completely positive matrix factorization problem is equivalent to finding an orthogonal matrix $Q$ so that $BQ$ is entrywise nonnegative, i.e.,
\begin{equation}\label{eq 7.12}
{\rm Find}~Q\in \R^{r\times r}~{\rm s.t.}~Q\in \mathcal{P}~{\rm and}~Q\in C,
\end{equation}
where $C$ is the set of $r\times r$ orthogonal matrices, and $\mathcal{P}:=\{Q\in {\mathbb R}^{r\times r}:\; BQ\in \R^{n\times r}_+\}$.
We would like to point out that the completely positive rank of $G$ is generally hard to compute (see \cite{berman2015open}) and we refer the readers to \cite[Theorem~4.1]{BomzeDickStill2015} for upper bounds of completely positive rank. These upper bounds are in the order of $n^2$ for large $n$. Instead of using these bounds as $r$, in our experiments, as a heuristic, we choose $r$ in the order of $n$ and we will specify our choices later.

In \cite{GroeDur2018}, the authors considered two algorithms for solving \eqref{eq 7.12}:
\begin{enumerate}
  \item the classical alternating projection method, which can be inefficient because ${\rm Proj}_{\cal P}$ is in general difficult to compute;
  \item the modified alternating projection algorithm (see \cite[Algorithm~2]{GroeDur2018}), which only requires computing projections onto $C$ and the nonnegative orthant $\mathbb{R}^{n\times r}_+$, as well as multiplications by $B$ and its Moore-Penrose inverse $B^\dagger$. This algorithm is described in Algorithm \ref{alg3} below.
\end{enumerate}
It was discussed in \cite[Section~5]{GroeDur2018} that the modified alternating projection algorithm is more efficient empirically than the classical alternating projection algorithm for solving \eqref{eq 7.12}.

\begin{algorithm}[h]
\caption{The modified alternating projection algorithm in \cite{GroeDur2018}}\label{alg3}
\begin{algorithmic}
\STATE {\bf Step 0.} Choose $r\geq n$ and $B\in{\mathbb R}^{n\times r}$ so that $G=BB^T$. Then select a $Q^0\in C$. Set $t=0$.

{\bf Step 1.} Compute $W^t = {\rm Proj}_{\mathbb{R}^{n\times r}_+}(BQ^t)$ and find
\[
Q^{t+1}\in{\rm Proj}_C[B^\dagger W^t+(I-B^\dagger B)Q^t].
\]

{\bf Step 2.} If a termination criterion is not met, set $t=t+1$. Go to {\bf Step 1}.

\end{algorithmic}
\end{algorithm}

Here, we consider an alternative approach for solving \eqref{eq 7.12}. Indeed, one can observe immediately that \eqref{eq 7.12} can be reformulated as the following split feasibility problem:
\begin{equation}\label{eq 7.11}
{\rm Find}~Q\in \R^{r\times r}~{\rm s.t.}~BQ\in D~{\rm and}~Q\in C,
\end{equation}
where $D=\mathbb{R}^{n\times r}_+$ and $C$ is the set of $r\times r$ orthogonal matrices. Note that the projections onto $C$ and $D$ have closed form solutions; see, for example, \cite[Lemma~4.1]{GroeDur2018} for the closed form formula of ${\rm Proj}_C$. Moreover, we have
$C^{\infty}=\{0\}$ because the set of $r\times r$ orthogonal matrices is bounded. Thus, we can apply ${\sf SpFeas}_{{\rm DC}_{\sf ls}}$ to solving \eqref{eq 7.11}, and any accumulation point of the sequence generated is a stationary point of the split feasibility problem \eqref{eq 7.11} according to Theorem~\ref{thm 3.1}.\footnote{We note that in this case $D$ is convex and hence $Q\mapsto \frac12d^2(BQ,D)$ is smooth. Our algorithm ${\sf SpFeas}_{{\rm DC}_{\sf ls}}$ reduces to the standard gradient projection algorithm with nonmonotone linesearch.}

\subsection{Numerical experiments for completely positive matrix factorization}
In this section, we compare ${\sf SpFeas}_{{\rm DC}_{\sf ls}}$ and the modified alternating projection algorithm (i.e., Algorithm~\ref{alg3})
for solving \eqref{eq 7.11} (or equivalently, \eqref{eq 7.12}). All codes are written in Matlab, and the experiments are performed in
Matlab 2019b on a 64-bit PC with an Intel(R) Core(TM) i7-6700 CPU (3.40GHz) and 32GB of RAM.

We first discuss the implementation details of the algorithms. In ${\sf SpFeas}_{{\rm DC}_{\sf ls}}$, we set $M=4$, $\tau=2$, $c=10^{-4}$, $L_{\rm max}=10^8$ and $L_{\rm min}=10^{-8}$. Moreover, we set $L^0_0=1$, and when $t\geq 1$:
\[
L^0_t=
\begin{cases}
{\rm min}\{{\rm max}\{\frac{{\rm tr}([Y^t]^TS^t)}{\Vert S^t\Vert_F^2}, L_{\rm min}\},L_{\rm max}\}& {\rm if}~{\rm tr}([Y^t]^TS^t)\geq 10^{-16}\\
{\rm min}\{{\rm max}\{\frac{{\bar L}_{t-1}}{1.1}, L_{\rm min}\},L_{\rm max}\}& \text{${\rm otherwise}.$}
\end{cases}
\]
where $S^t=Q^t-Q^{t-1}$ and $Y^t=B^T[BQ^t-\eta^t]-B^T[BQ^{t-1}-\eta^{t-1}]$, with $\eta^t\in {\rm Proj}_D(BQ^t)$ chosen in {\bf Step 1a)} of ${\sf SpFeas}_{{\rm DC}_{\sf ls}}$. We terminate it when ${\rm iter}>5000$ or ${\rm min}\{BQ^t\}_{ij}\geq -10^{-16}$ or $\bar L_t > 10^{10}$. On the other hand, for the modified alternating projection algorithm (i.e., Algorithm~\ref{alg3}), we terminate it when ${\rm iter}>5000$ or ${\rm min}\{BQ^t\}_{ij}\geq -10^{-15}$. We will describe their initializations later.

Both algorithms require a choice of $r\ge n$ and an initial factorization $G = BB^T$. In our experiments below, we follow the approach in \cite[Section~3]{GroeDur2018} to generate the $B$. Specifically,
given a completely positive matrix $G\in {\mathbb R}^{n\times n}$, we compute the Cholesky decomposition of $G$ such that $G=LL^T$ for some lower triangular matrix $L$, if successful, and set ${\bar B}=L$. On the other hand, if the Cholesky decomposition fails, we compute the eigenvalue decomposition of $G$ such that $G=U\Sigma_G U^T$ for some orthogonal matrix $U$ and diagonal matrix $\Sigma_G$, and set ${\bar B}=U\Sigma_G^{\frac{1}{2}}U^T$. Then we define $B$ as follows:
\begin{equation}\label{eq 7.13}
B:=\bigg[{\bar {\bf b}}_1, \dots,{\bar {\bf b}}_{j-1},\frac{1}{\sqrt m}{\bar {\bf b}}_j,{\bar {\bf b}}_{j+1}, \dots,{\bar {\bf b}}_{n}, \underbrace{\frac{1}{\sqrt m}{\bar {\bf b}}_j,\dots,\frac{1}{\sqrt m}{\bar {\bf b}}_j}_{m-1~{\rm columns}}\bigg],
\end{equation}
where ${\bar {\bf b}}_{j}$ is the column of ${\bar B}$ with the least number of negative entries, and $m=r-n+1$.

We perform two experiments comparing ${\sf SpFeas}_{{\rm DC}_{\sf ls}}$ and Algorithm~\ref{alg3}.
In our first experiment, we consider randomly generated completely positive matrices as in \cite[Section~7.8]{GroeDur2018}. We generate a random $n\times n$ completely positive matrix $G$ using the following MATLAB code:
\begin{verbatim}
G_0 = abs(randn(n,2*n));  G = G_0*G_0';
\end{verbatim}
We set $(n,r)=(n,1.5n)=(10i,15i)$ for $i=1,2,3,4,10,20,30,40$ in Table \ref{table 7.1} and \ref{table 7.15}, and set $ (n, r) = (n, 3n+1) = (10i, 30i+1) $ for $ i=1,2,3,4,10,20 $ in Table \ref{table 7.16}. For each $i$, we randomly generate $50$ completely positive matrices $G$ as described above. We generate $B$ as in \eqref{eq 7.13} and consider two possible ways of initializing the algorithms:
\begin{enumerate}[(a)]
  \item We initialize both algorithms at $Q^0 = I$ for solving the corresponding \eqref{eq 7.11}.
  \item We initialize both algorithms at the same random initial point, where we first generate an $r\times r$ matrix $\widetilde Q$ with i.i.d. standard Gaussian entries and then pick any $Q^0\in {\rm Proj}_C(\widetilde Q)$.
\end{enumerate}
We first present the computational results with $Q^0 = I$ in Table~\ref{table 7.1}, where we report the
largest and smallest function values ($\frac12 d^2(BQ^t,D)$) at termination, the average number of iterations among successful instances (${\rm iter}_{\rm s}$) and the average number of iterations among failed instances (${\rm iter}_{\rm f}$).\footnote{\label{notesuccess}We say that the instance is solved by the algorithm successfully if the algorithm is terminated with the desired accuracy achieved, i.e, ${\rm min}\{BQ^t\}_{ij}\geq -10^{-16}$ for ${\sf SpFeas}_{{\rm DC}_{\sf ls}}$, and ${\rm min}\{BQ^t\}_{ij}\geq -10^{-15}$ for Algorithm~\ref{alg3}.} We also report the average CPU time (in seconds) among successful instances (${\rm CPU}_{\rm s}$) and the average CPU time among failed instances (${\rm CPU}_{\rm f}$). The success rate is also listed. We can see from Table~\ref{table 7.1} that ${\sf SpFeas}_{{\rm DC}_{\sf ls}}$ significantly outperforms Algorithm~\ref{alg3}, with ${\sf SpFeas}_{{\rm DC}_{\sf ls}}$ being able to solve all instances and being much faster.

\begin{table}[h]
\centering
\caption{Comparing ${\sf SpFeas}_{{\rm DC}_{\sf ls}}$ and Algorithm \ref{alg3} on factorizing random completely positive matrices when $Q^0 = I$.}
\begin{tabular}{|c|c|c|c|c|c|c|c|c|}
\hline
&&\multicolumn{7}{c|}{${\sf SpFeas}_{{\rm DC}_{\sf ls}}$}\\ \cline{3-9}
$n$ &$r$& success~($\%$) & ${\rm fval}_{\rm max}$ & ${\rm fval}_{\rm min}$ & ${\rm iter}_{\rm s}$ &${\rm iter}_{\rm f}$ & ${\rm CPU}_{\rm s}$& ${\rm CPU}_{\rm f}$ \\ \hline
 10  &    15  &   100 & 2e-33 & 0e+00 &     5 &   - & 0.0010  &   -  \\
 20  &    30  &   100 & 0e+00 & 0e+00 &     8 &   - & 0.0014  &   -  \\
 30  &    45  &   100 & 0e+00 & 0e+00 &    10 &   - & 0.0050  &   -  \\
 40  &    60  &   100 & 4e-33 & 0e+00 &    11 &   - & 0.0077  &   -  \\
100  &   150  &   100 & 0e+00 & 0e+00 &    18 &   - & 0.0622  &   -  \\
200  &   300  &   100 & 3e-33 & 0e+00 &   190 &   - & 2.2804  &   -  \\
300  &   450  &   100 & 5e-33 & 0e+00 &   486 &   - & 14.0715 &   -  \\
400  &   600  &   100 & 1e-34 & 0e+00 &   731 &   - & 43.5873 &   -  \\
\hline
&&\multicolumn{7}{c|}{Algorithm ~\ref{alg3}}\\ \cline{3-9}
$n$ &$r$& success~($\%$) & ${\rm fval}_{\rm max}$ & ${\rm fval}_{\rm min}$ & ${\rm iter}_{\rm s}$ &${\rm iter}_{\rm f}$ & ${\rm CPU}_{\rm s}$& ${\rm CPU}_{\rm f}$ \\ \hline
10   &    15  &     0 & 1e+00 & 9e-03 &   - &  5001 &   - & 0.1195   \\
20   &    30  &     0 & 4e+00 & 1e-01 &   - &  5001 &   - & 0.5320   \\
30   &    45  &     0 & 6e+00 & 2e-01 &   - &  5001 &   - & 1.6316   \\
40   &    60  &     0 & 6e+00 & 6e-01 &   - &  5001 &   - & 2.3411   \\
100  &   150  &     0 & 1e+01 & 2e+00 &   - &  5001 &   - & 13.9835  \\
200  &   300  &     0 & 3e+01 & 1e+01 &   - &  5001 &   - & 57.6484  \\
300  &   450  &     0 & 3e+02 & 4e+01 &   - &  5001 &   - & 147.4416 \\
400  &   600  &     0 & 3e+02 & 9e+01 &   - &  5001 &   - & 293.6904 \\
\hline
\end{tabular}
\label{table 7.1}
\end{table}

Then, in Table~\ref{table 7.15} and Table~\ref{table 7.16}, we present the computational results with random initial points, and set $ r = 1.5n $ and $ r=3n+1 $ respectively.\footnote{We do not present results with $ (n, 3n+1) = (10i, 30i+1) $ for $ i=30, 40 $ because they take too much CPU time.} Here, for each random instance, we run the algorithms on the same set of random initial points, where we use at most $100$ initial points when $n\le 50$, and at most $10$ initial points otherwise,\footnote{This choice follows the one used in \cite[Table~2]{GroeDur2018}.} and
declare a success once the random instance is solved by the algorithm successfully.\footnoteref{notesuccess} We report the
largest and smallest function values ($\frac12 d^2(BQ^t,D)$) at termination, the average total number of iterations among successful instances (${\rm iter}_{\rm s}$) and failed instances (${\rm iter}_{\rm f}$), the average total CPU time (in seconds) among successful instances (${\rm CPU}_{\rm s}$) and failed instances (${\rm CPU}_{\rm f}$), and the success rate.
We also report the average number of random initial points used among instances that are successfully solved (${\rm InitNo_s}$). We can see from Table~\ref{table 7.15} and Table~\ref{table 7.16} that ${\sf SpFeas}_{{\rm DC}_{\sf ls}}$ significantly outperforms Algorithm~\ref{alg3}, with ${\sf SpFeas}_{{\rm DC}_{\sf ls}}$ being able to solve all instances using only {\em one} random initial point, and being much faster. Moreover, by comparing these two tables, we can see that ${\sf SpFeas}_{{\rm DC}_{\sf ls}}$ performs similarly for the two choices of $ r $, while the performance of Algorithm~\ref{alg3} is sensitive to the choice of $ r $.

\begin{table}[h]
	\centering
	\caption{Comparing ${\sf SpFeas}_{{\rm DC}_{\sf ls}}$ and Algorithm \ref{alg3} on factorizing random completely positive matrices with $ r = 1.5n $ and random initializations.}
	\begin{tabular}{|c|c|c|c|c|c|c|c|c|c|}
		\hline
		&&\multicolumn{8}{c|}{${\sf SpFeas}_{{\rm DC}_{\sf ls}}$}\\ \cline{3-10}
		$n$ &$r$& success~($\%$) & ${\rm fval}_{\rm max}$ & ${\rm fval}_{\rm min}$ & ${\rm iter}_{\rm s}$ &${\rm iter}_{\rm f}$ & ${\rm CPU}_{\rm s}$ & ${\rm CPU}_{\rm f}$ & $ {\rm InitNo_s} $  \\ \hline
         10   &    15  &   100 & 0e+00 & 0e+00 &    14 &   - & 0.0013  &   - &   1.0 \\
         20   &    30  &   100 & 0e+00 & 0e+00 &    18 &   - & 0.0026  &   - &   1.0 \\
         30   &    45  &   100 & 0e+00 & 0e+00 &    25 &   - & 0.0114  &   - &   1.0 \\
         40   &    60  &   100 & 0e+00 & 0e+00 &    37 &   - & 0.0212  &   - &   1.0 \\
         100  &   150  &   100 & 0e+00 & 0e+00 &   112 &   - & 0.3440  &   - &   1.0 \\
         200  &   300  &   100 & 4e-33 & 0e+00 &   243 &   - & 3.0006  &   - &   1.0 \\
         300  &   450  &   100 & 4e-33 & 0e+00 &   463 &   - & 14.7035 &   - &   1.0 \\
         400  &   600  &   100 & 3e-33 & 0e+00 &   711 &   - & 44.6827 &   - &   1.0 \\
		\hline
		&&\multicolumn{8}{c|}{Algorithm ~\ref{alg3}}\\ \cline{3-10}
		$n$ &$r$& success~($\%$) & ${\rm fval}_{\rm max}$ & ${\rm fval}_{\rm min}$ & ${\rm iter}_{\rm s}$ &${\rm iter}_{\rm f}$ & ${\rm CPU}_{\rm s}$ & ${\rm CPU}_{\rm f}$ & $ {\rm InitNo_s} $  \\ \hline
    10   &    15  &   100 & 2e-30 & 2e-33 &  5232 &   -   & 0.1474  &   -       &   1.9  \\
    20   &    30  &   100 & 3e-30 & 9e-32 &  4468 &   -   & 0.4483  &   -       &   1.5  \\
    30   &    45  &   100 & 2e-30 & 2e-33 &  5496 &   -   & 2.0830  &   -       &   1.7  \\
    40   &    60  &   100 & 2e-30 & 2e-32 &  7497 &   -   & 3.4731  &   -       &   1.9  \\
    100  &   150  &     2 & 2e-06 & 1e-31 & 44693 & 50010 & 99.1045 & 117.2377  &   9.0  \\
    200  &   300  &     0 & 4e-05 & 2e-28 &   -   & 50010 &   -     & 546.9136  &   -    \\
    300  &   450  &     0 & 2e-07 & 2e-28 &   -   & 50010 &   -     & 1413.3341 &   -    \\
    400  &   600  &     0 & 7e-04 & 1e-27 &   -   & 50010 &   -     & 2809.6050 &   -    \\
		\hline
	\end{tabular}	
\label{table 7.15}
\end{table}	

\begin{table}[h]
	\centering
	\caption{Comparing ${\sf SpFeas}_{{\rm DC}_{\sf ls}}$ and Algorithm \ref{alg3} on factorizing random completely positive matrices with $ r = 3n+1 $ and random initializations.}
	\begin{tabular}{|c|c|c|c|c|c|c|c|c|c|}
		\hline
		&&\multicolumn{8}{c|}{${\sf SpFeas}_{{\rm DC}_{\sf ls}}$}\\ \cline{3-10}
		$n$ &$r$& success~($\%$) & ${\rm fval}_{\rm max}$ & ${\rm fval}_{\rm min}$ & ${\rm iter}_{\rm s}$ &${\rm iter}_{\rm f}$ & ${\rm CPU}_{\rm s}$ & ${\rm CPU}_{\rm f}$ & $ {\rm InitNo_s} $  \\ \hline
		 10  &    31  &   100 & 4e-35 & 0e+00 &    13 & - & 0.0030  & - &   1.0 \\
		 20  &    61  &   100 & 0e+00 & 0e+00 &    14 & - & 0.0070  & - &   1.0 \\
		 30  &    91  &   100 & 0e+00 & 0e+00 &    16 & - & 0.0168  & - &   1.0 \\
		 40  &   121  &   100 & 0e+00 & 0e+00 &    19 & - & 0.0303  & - &   1.0 \\
		100  &   301  &   100 & 1e-35 & 0e+00 &    93 & - & 1.0123  & - &   1.0 \\
		200  &   601  &   100 & 5e-33 & 0e+00 &   209 & - & 11.0123 & - &   1.0 \\
		\hline
		&&\multicolumn{8}{c|}{Algorithm ~\ref{alg3}}\\ \cline{3-10}
		$n$ &$r$& success~($\%$) & ${\rm fval}_{\rm max}$ & ${\rm fval}_{\rm min}$ & ${\rm iter}_{\rm s}$ &${\rm iter}_{\rm f}$ & ${\rm CPU}_{\rm s}$ & ${\rm CPU}_{\rm f}$ & $ {\rm InitNo_s} $  \\ \hline
		 10  &    31  &   100 & 4e-30 & 3e-31 &  1237 &   -   & 0.1126 &   -       & 1.1 \\
		 20  &    61  &   100 & 3e-30 & 2e-32 &   753 &   -   & 0.2883 &   -       & 1.0 \\
		 30  &    91  &   100 & 4e-30 & 2e-31 &  2065 &   -   & 1.6742 &   -       & 1.0 \\
		 40  &   121  &   100 & 5e-30 & 2e-31 &  4739 &   -   & 6.0358 &   -       & 1.4 \\
		100  &   301  &     0 & 3e-27 & 2e-29 &   -   & 50010 &   -    & 458.5531  & - 	 \\
		200  &   601  &     0 & 6e-27 & 2e-28 &   -   & 50010 &   -    & 2300.6188 & -   \\
		\hline
	\end{tabular}	
	\label{table 7.16}
\end{table}	

Next, as in \cite[Section~7.6]{GroeDur2018}, we perform a second experiment to study the performance of the algorithms in factorizing completely positive matrices that are close to the boundary of the completely positive cone. Specifically, as in \cite[Example~7.3]{GroeDur2018}, we consider
\[
G:=\begin{bmatrix}
8&5&1&1&5\\
5&8&5&1&1\\
1&5&8&5&1\\
1&1&5&8&5\\
5&1&1&5&8
\end{bmatrix},~P:=\begin{bmatrix}
     2   &  1  &   1   &  1 &    1\\
     1   &  2   &  1   &  1    & 1\\
     1    & 1 &    2   &  1   &  1\\
     1&     1    & 1   &  2  &   1\\
     1 &    1    & 1   &  1  &   2\\
     \end{bmatrix},
\]
and define
\begin{equation}\label{G_lambda}
G_{\lambda}=\lambda G+(1-\lambda)P.
\end{equation}
We apply the two algorithms to factorizing $G_{\lambda}$ with different values of $\lambda$. Moreover, for the two algorithms, we consider random initializations: we first generate an $r\times r$ matrix $\widetilde Q$ with i.i.d. standard Gaussian entries and then pick any $Q^0\in {\rm Proj}_C(\widetilde Q)$.

In our experiments below, we consider $\lambda$ as listed in Table~\ref{table 7.2} and set $r = 12$: This choice of $r$ was also used in \cite[Section~7.6]{GroeDur2018}. For each $\lambda$, we consider $100$ random initializations as described above, and apply the two algorithms to factorizing $G_\lambda$ from these initial points. Our computational results are presented in Table~\ref{table 7.2}, where we report the
largest and smallest function values ($\frac12 d^2(BQ^t,D)$) at termination, the average number of iterations among successful instances (${\rm iter}_{\rm s}$) and the average number of iterations among failed instances (${\rm iter}_{\rm f}$).\footnote{As in the previous experiment, we say that the instance is solved by the algorithm successfully if the algorithm is terminated with the desired accuracy achieved, i.e, ${\rm min}\{BQ^t\}_{ij}\geq -10^{-16}$ for ${\sf SpFeas}_{{\rm DC}_{\sf ls}}$, and ${\rm min}\{BQ^t\}_{ij}\geq -10^{-15}$ for Algorithm~\ref{alg3}.} We also report the average CPU time (in seconds) among successful instances (${\rm CPU}_{\rm s}$) as well as the average CPU time among failed instances (${\rm CPU}_{\rm f}$). We can see from Table \ref{table 7.2} that ${\sf SpFeas}_{{\rm DC}_{\sf ls}}$ again significantly outperforms Algorithm~\ref{alg3}. Moreover, the success rates for both algorithms decrease when $\lambda$ increases.

\begin{table}[h]
\centering
\caption{Comparing ${\sf SpFeas}_{{\rm DC}_{\sf ls}}$ and Algorithm \ref{alg3} on factorizing $G_\lambda$ in \eqref{G_lambda}.}
\begin{tabular}{|c|c|c|c|c|c|c|c|c|}
\hline
&&\multicolumn{7}{c|}{${\sf SpFeas}_{{\rm DC}_{\sf ls}}$}\\ \cline{3-9}
$\lambda$ &$r$& success~($\%$) & ${\rm fval}_{\rm max}$ & ${\rm fval}_{\rm min}$ & ${\rm iter}_{\rm s}$ &${\rm iter}_{\rm f}$ & ${\rm CPU}_{\rm s}$& ${\rm CPU}_{\rm f}$\\ \hline
    0.00  &    12  &   100 & 0e+00 & 0e+00 &     8 &   -   & 0.0011 &   -     \\
    0.20  &    12  &   100 & 0e+00 & 0e+00 &    27 &   -   & 0.0011 &   -     \\
    0.40  &    12  &   100 & 0e+00 & 0e+00 &    87 &   -   & 0.0031 &   -     \\
    0.60  &    12  &   100 & 0e+00 & 0e+00 &   333 &   -   & 0.0116 &   -     \\
    0.80  &    12  &    99 & 3e-21 & 0e+00 &  1229 &  5001 & 0.0399 & 0.1510  \\
    0.90  &    12  &    82 & 2e-17 & 0e+00 &  2547 &  5001 & 0.0786 & 0.1529  \\
    0.95  &    12  &    29 & 7e-13 & 0e+00 &  2897 &  5001 & 0.0907 & 0.1611  \\
    0.96  &    12  &     9 & 4e-11 & 0e+00 &  2994 &  5001 & 0.0951 & 0.1656  \\
    0.97  &    12  &     4 & 7e-12 & 0e+00 &  2892 &  5001 & 0.0911 & 0.1709  \\
    0.98  &    12  &     5 & 2e-10 & 0e+00 &  3356 &  5001 & 0.1057 & 0.1814  \\
    0.99  &    12  &     1 & 8e-07 & 0e+00 &  3421 &  5001 & 0.1243 & 0.2047  \\
\hline
&&\multicolumn{7}{c|}{Algorithm ~\ref{alg3}}\\ \cline{3-9}
$\lambda$ &$r$& success~($\%$) & ${\rm fval}_{\rm max}$ & ${\rm fval}_{\rm min}$ & ${\rm iter}_{\rm s}$ &${\rm iter}_{\rm f}$ & ${\rm CPU}_{\rm s}$& ${\rm CPU}_{\rm f}$\\ \hline
0.00  &    12  &    99 & 5e-17 & 2e-31 &   155 &  5001 & 0.0047 & 0.1158  \\
0.20  &    12  &    91 & 2e-10 & 2e-32 &   476 &  5001 & 0.0114 & 0.0955  \\
0.40  &    12  &    77 & 5e-02 & 2e-31 &   703 &  5001 & 0.0161 & 0.0937  \\
0.60  &    12  &    45 & 2e-01 & 3e-32 &   871 &  5001 & 0.0198 & 0.0877  \\
0.80  &    12  &    36 & 4e-01 & 7e-32 &  1240 &  5001 & 0.0268 & 0.0889  \\
0.90  &    12  &    16 & 5e-01 & 4e-31 &  2899 &  5001 & 0.0558 & 0.0893  \\
0.95  &    12  &    26 & 6e-01 & 4e-31 &  3106 &  5001 & 0.0590 & 0.0879  \\
0.96  &    12  &    31 & 6e-01 & 2e-31 &  2842 &  5001 & 0.0547 & 0.0895  \\
0.97  &    12  &    36 & 6e-01 & 2e-31 &  2980 &  5001 & 0.0577 & 0.0870  \\
0.98  &    12  &     0 & 7e-01 & 8e-17 &   -   &  5001 &   -    & 0.0921  \\
0.99  &    12  &     0 & 7e-01 & 1e-04 &   -   &  5001 &   -    & 0.0922  \\
\hline
\end{tabular}
\label{table 7.2}
\end{table}

\section{Sparse matrix factorization}\label{sec7}

Given a matrix $G$, the sparse matrix factorization problem consists in factorizing $G$ (approximately) as the product of several sparse matrices. This problem is closely related to deep learning, sparse encoding and dictionary learning; see \cite{NeysPani2014} and references therein.
In this section, we consider a special instance of the sparse matrix factorization problem. Specifically, given a positive semidefinite matrix $G\in \R^{n\times n}$, we would like to find a {\em sparse} matrix $P\in \R^{n\times n}$ so that $G= PP^T$. In addition, we require the {\em columns} of $P$ to be uniformly sparse: this ensures the cost of the multiplication ${\bf p}^T_jx$ remains more or less the same for each $j$, where ${\bf p}_j$ is the $j$th column of $P$ and $x$ is an $n$-dimensional vector. More precisely,
our problem is described as follows:
 \begin{equation}\label{eq 8.14}
{\rm Find}~P\in \R^{n\times n}~{\rm s.t.}~\max_{j=1,\dots,n}\Vert {\bf p}_j\Vert_0\leq s~{\rm and}~G=PP^T,
\end{equation}
where $G\in {\R}^{n\times n}$ is a given positive semidefinite matrix, and $\|v\|_0$ is the number of nonzero entries of the vector $v$.

To solve \eqref{eq 8.14}, we mimic the approach described in Section \ref{sec 6.1} and reformulate it as a split feasibility problem. In detail, starting with an initial factorization $G=BB^T$ for some $B\in \mathbb{R}^{n\times n}$, one can see that \eqref{eq 8.14} can be equivalently reformulated as the following split feasibility problem:
\begin{equation}\label{eq 8.15}
{\rm Find}~Q\in \R^{n\times n}~{\rm s.t.}~BQ\in D~{\rm and}~Q\in C,
\end{equation}
where $C$ is the set of $n\times n$ orthogonal matrices and $D=\{U\in {\mathbb R}^{n\times n}:\;\Vert {\bf u}_j\Vert_0\leq s\ \mbox{for each }i=1,\ldots,n\}$, with ${\bf u}_j$ being the $j$th column of $U\in \R^{n\times n}$.
It is easy to see that if $Q^*$ is a solution of \eqref{eq 8.15}, then ${BQ^*}$ solves \eqref{eq 8.14}.

Note that the projections onto $C$ and $D$ have closed form solutions; see \cite[Lemma~4.1]{GroeDur2018} and \cite[Proposition~3.1]{LuZhan2013} for the closed form formula of ${\rm Proj}_C$ and ${\rm Proj}_D$, respectively. In addition, the boundedness of $C$ implies that
$C^{\infty}=\{0\}$. Therefore, we can employ ${\sf SpFeas}_{{\rm DC}_{\sf ls}}$ and ${\sf SpFeas}_{\rm DC}$ to solve \eqref{eq 8.15} according to the discussions in Section~\ref{sec3}, and it follows from Theorem~\ref{thm 3.1} and Corollary~\ref{coro 4.1} that any accumulation point of the sequence generated is a stationary point of the split feasibility problem \eqref{eq 8.15}.

\subsection{Numerical experiments for sparse matrix factorization}
In this section, we perform numerical experiments to compare the performances of ${\sf SpFeas}_{{\rm DC}_{\sf ls}}$ and ${\sf SpFeas}_{{\rm DC}}$ on solving \eqref{eq 8.15}.
All codes are written in Matlab, and the experiments are performed in
Matlab 2019b on a 64-bit PC with an Intel(R) Core(TM) i7-6700 CPU (3.40GHz) and 32GB of RAM.

We first discuss the implementation details of the algorithms. In ${\sf SpFeas}_{{\rm DC}_{\sf ls}}$, we set $M=4$, $\tau=2$, $c=10^{-4}$, $L_{\rm max}=10^8$, $L_{\rm min}=10^{-8}$. Moreover, we set $L^0_0=1$, and when $t\geq 1$:
$$L^0_t=
\begin{cases}
{\rm min}\{{\rm max}\{\frac{{\rm tr}([Y^t]^TS^t)}{\Vert S^t\Vert_F^2}, L_{\rm min}\},L_{\rm max}\}& {\rm if}~{\rm tr}([Y^t]^TS^t)\geq 10^{-12}\\
{\rm min}\{{\rm max}\{\frac{{\bar L}_{t-1}}{2.5}, L_{\rm min}\},L_{\rm max}\}& \text{${\rm otherwise}.$}
\end{cases}$$
where $S^t=Q^t-Q^{t-1}$ and $Y^t=B^T[BQ^t-\eta^t]-B^T[BQ^{t-1}-\eta^{t-1}]$, with $\eta^t\in {\rm Proj}_D(BQ^t)$ chosen in {\bf Step 1a)} of ${\sf SpFeas}_{{\rm DC}_{\sf ls}}$. On the other hand, for ${\sf SpFeas}_{\rm DC}$, we set $L=\lambda_{\max}(B^TB)+10^{-4}$. We initialize both algorithms at the identity matrix. We terminate ${\sf SpFeas}_{{\rm DC}_{\sf ls}}$ when $d(BQ^t,D)<10^{-9}$, or ${\rm iter}>10000$, or $L_t > 10^{10}$, while ${\sf SpFeas}_{\rm DC}$ is terminated when $d(BQ^t,D)<10^{-9}$ or ${\rm iter}>10000$.

We compare the above algorithms on randomly generated positive semidefinite matrices that admit uniformly sparse factorizations. We first generate a random matrix $P_0\in \R^{n\times n}$ with i.i.d. standard Gaussian entries. We then project $P_0$ onto $D$ to obtain $\tilde P_0$ and form a positive semidefinite matrix $G$ by $G = \tilde P_0\tilde P_0^T$. We then use the above algorithms to solve the corresponding \eqref{eq 8.15} with $B:= G^\frac12$.

In our experiment below, we set $(n,r)=(100i,0.1j)$ for $i=1,2,3,4$ and $j=6,7,8,9$ and set $s = nr$. For each $i$ and $j$, we generate $20$ random instances as described above. We present the computational results in Table~\ref{table 8.1}, where we report the value $d(BQ^t,D)$ at termination, the number of iterations (iter), and the CPU time in seconds (CPU), averaged over the $20$ random instances. One can see that ${\sf SpFeas}_{{\rm DC}_{\sf ls}}$ notably outperforms ${\sf SpFeas}_{\rm DC}$ in terms of both CPU times and the terminating function values. Moreover, both algorithms become faster when $s$ increases.

\begin{table}[!htbp]
\centering
\caption{Comparing ${\sf SpFeas}_{{\rm DC}_{\sf ls}}$ and ${\sf SpFeas}_{{\rm DC}}$ on solving \eqref{eq 8.15} with $s = nr$.}
\begin{tabular}{|c|c|c|c|c|c|c|c|}
\hline
&&\multicolumn{3}{c|}{${\sf SpFeas}_{{\rm DC}_{\sf ls}}$}&\multicolumn{3}{c|}{${\sf SpFeas}_{\rm DC}$}\\ \cline{3-8}
{\em n} & {\em r} & $d(BQ^t,D)$ & iter & CPU & $d(BQ^t,D)$ & iter & CPU\\ \hline
   100 &   0.6 &1e-09 &  4014 &  14.7 &3e-04 & 10001 &  17.8  \\
   100 &   0.7 &1e-09 &   774 &   2.7 &1e-09 &  7270 &  12.5  \\
   100 &   0.8 &9e-10 &   245 &   0.8 &1e-09 &  2940 &   5.0  \\
   100 &   0.9 &7e-10 &   141 &   0.4 &1e-09 &  1381 &   2.4  \\ \hline
   200 &   0.6 &1e-09 &  3800 &  54.0 &3e-04 & 10001 &  70.1  \\
   200 &   0.7 &1e-09 &   761 &  10.4 &1e-09 &  7526 &  50.7  \\
   200 &   0.8 &9e-10 &   241 &   3.0 &1e-09 &  3122 &  21.0  \\
   200 &   0.9 &7e-10 &   146 &   1.8 &1e-09 &  1477 &   9.9  \\ \hline
   300 &   0.6 &1e-09 &  4067 & 136.4 &4e-04 & 10001 & 155.2  \\
   300 &   0.7 &1e-09 &   734 &  22.8 &1e-09 &  7636 & 116.6  \\
   300 &   0.8 &9e-10 &   274 &   8.2 &1e-09 &  3168 &  48.6  \\
   300 &   0.9 &7e-10 &   149 &   4.1 &1e-09 &  1509 &  22.2  \\ \hline
   400 &   0.6 &1e-09 &  3736 & 231.8 &6e-04 & 10001 & 295.9  \\
   400 &   0.7 &1e-09 &   626 &  37.1 &1e-09 &  7711 & 222.5  \\
   400 &   0.8 &9e-10 &   246 &  13.8 &1e-09 &  3201 &  92.4  \\
   400 &   0.9 &7e-10 &   145 &   7.7 &1e-09 &  1528 &  43.9  \\ \hline
\end{tabular}
\label{table 8.1}
\end{table}

\section{An outlier detection problem}\label{sec8}

In this section, we look at an outlier detection problem. Specifically, we consider the problem of finding an $s$-sparse solution of a linear system $Ax = b$ where some of the $b_i$'s are wrongly recorded.
This class of problem arises in applications such as compressed sensing, where signals may be contaminated by the so-called electromyographic noise, resulting in extreme measurements \cite{PolaCarrBlanBarn2012}.

Here, we approach this problem by considering the following split feasibility problem, which assumes prior knowledge of $s$ and the number of incorrect $b_i$'s:
 \begin{equation}\label{eq 6.12}
{\rm Find}~x\in \R^{n}~{\rm s.t.}~x\in C,\ Ax\in D,
\end{equation}
where $A\in{\mathbb R}^{m\times n}$, $b\in {\R}^m$, $C=\{x\in \R^n:\Vert x\Vert_0\leq s, \Vert x\Vert_{\infty}\leq 10^8\}$, and $D=\{y\in \R^m:\Vert y\Vert_0\leq r\}+b$, with $r$ being an upper estimate of the number of incorrect $b_i$'s (outliers).

Note that the projections onto $C$ and $D$ have closed form solutions; see, for example, \cite[Proposition~3.1]{LuZhan2013}. Moreover, since $C$ is bounded, we have
$C^{\infty}=\{0\}$. Thus, we can apply ${\sf SpFeas}_{{\rm DC}_{\sf ls}}$ and ${\sf SpFeas}_{\rm DC}$ to solving \eqref{eq 6.12} according to the discussions in Section~\ref{sec3}. Moreover, any accumulation point of the sequence generated is a stationary point of the split feasibility problem \eqref{eq 6.12}.

\subsection{Numerical experiments for outlier detection}
In this section, we perform numerical experiments to study the behavior of  ${\sf SpFeas}_{{\rm DC}_{\sf ls}}$ and ${\sf SpFeas}_{\rm DC}$ on the outlier detection problem \eqref{eq 6.12}. All codes are written in Matlab, and the experiments are performed in
Matlab 2019b on a 64-bit PC with an Intel(R) Core(TM) i7-6700 CPU (3.40GHz) and 32GB of RAM.

We first discuss the implementation details of the algorithms. In ${\sf SpFeas}_{{\rm DC}_{\sf ls}}$, we set $M=4$, $\tau=2$, $c=10^{-4}$, $L_{\rm max}=10^8$ and $L_{\rm min}=10^{-8}$. We also set
$L^0_0=1$, and when $t\geq 1$,
\[
L^0_t=
\begin{cases}
{\rm min}\{{\rm max}\{\frac{{s^t}^Ty^t}{\Vert s^t\Vert^2}, L_{\rm min}\},L_{\rm max}\}& {\rm if}~{s^t}^Ty^t\geq 10^{-12}\\
{\rm min}\{{\rm max}\{\frac{{\bar L}_{t-1}}{2}, L_{\rm min}\},L_{\rm max}\}& \text{${\rm otherwise}.$}
\end{cases}
\]
where $s^t=x^t-x^{t-1}$, $y^t=A^T[Ax^t-\eta^t]-A^T[Ax^{t-1}-\eta^{t-1}]$, with $\eta^t$ defined in {\bf Step 1a)} of the algorithm. We initialize
${\sf SpFeas}_{{\rm DC}_{\sf ls}}$ at $x^0=0$ and terminate it when
\[
\frac{\sqrt{\left(\sqrt{\lambda_{\max}(A^TA)}\| A(x^t-x^{t-1})\|+{\bar L}_{t-1}\Vert x^t-x^{t-1}\Vert\right)^2+ \Vert x^t-x^{t-1}\Vert^2}}{{\rm max}\{1,\Vert x^t\Vert\}}<10^{-8};
\]
following the discussions in \cite[Section~6]{LiuPongTake2018}, this guarantees $d(0,\partial \Xi(x^t,\eta^{t-1}))< 10^{-8}{\rm max}\{1,\Vert x^t\Vert\}$, where $\Xi(x,\eta) := h(x) + P(x) - \eta^Tx + g^*(\eta)$, with $h$, $P$ and $g$ given in \eqref{eq 3.7}, and $g^*$ is the convex conjugate of $g$.\footnote{As discussed in \cite[Section~6]{LiuPongTake2018}, this termination criterion is motivated by the fact that \eqref{inclusion:immed} holds if and only if $0 \in \partial \Xi(\bar x,\bar \eta)$ for some $\bar \eta$.}
On the other hand, for ${\sf SpFeas}_{\rm DC}$, we set $L=\lambda_{\max}(A^TA)+10^{-4}$. We initialize this algorithm at $x^0=0$ and terminate it
when
\[
\frac{\sqrt{\left(\sqrt{\lambda_{\max}(A^TA)}\Vert A(x^t-x^{t-1})\Vert+L\Vert x^t-x^{t-1}\Vert\right)^2 + \Vert x^t-x^{t-1}\Vert^2}}{{\rm max}\{1,\Vert x^t\Vert\}}<10^{-8},
\]
or when the number of iterations reaches $3000$.

We compare the above algorithms on randomly generated instances. We first generate an $m\times n$ matrix $A$ with i.i.d.
standard Gaussian entries, and normalize it to have unit column norms.
We next generate an $s$-sparse vector $w\in{\mathbb R}^n$ with i.i.d. standard Gaussian entries at uniformly randomly chosen positions.
We then set
\[
b_i=
\begin{cases}
(Aw)_i\\
(Aw)_i+10\cdot {\rm sign}({\bar n}_{i-m+r})& i=m-r+1,\dots,m.
\end{cases}
\]
where ${\bar n}\in \R^r$ has i.i.d. standard Gaussian entries.

In our experiment below, we set $(n,m,s,r)$ as listed in Table~\ref{table 6.1}. For each quadruple $(n,m,s,r)$, we generate $20$ random instances as described above.
Our computational results are presented in Table~\ref{table 6.1}, where we report the
value $d(Ax^t,D)$ at termination, the number of iterations (iter), and the CPU time in seconds (CPU),
averaged over the $20$ random instances. One can observe that ${\sf SpFeas}_{{\rm DC}_{\sf ls}}$ significantly outperforms ${\sf SpFeas}_{\rm DC}$ in terms of both CPU times and the terminating function values.

\begin{table}[!htbp]
\centering
\caption{Comparing ${\sf SpFeas}_{{\rm DC}_{\sf ls}}$ and ${\sf SpFeas}_{{\rm DC}}$ on solving \eqref{eq 6.12}.}
\begin{tabular}{|c|c|c|c|c|c|c|c|c|c|}
\hline
&&&&\multicolumn{3}{c|}{${\sf SpFeas}_{{\rm DC}_{\sf ls}}$}&\multicolumn{3}{c|}{${\sf SpFeas}_{\rm DC}$}\\ \cline{5-10}
{\em n} & {\em m} & {\em s} & {\em r} & $d(Ax^t,D)$ & iter & CPU & $d(Ax^t,D)$ & iter & CPU\\ \hline
10000 &  2000 &   500 &   100 & 2e-08 &    94 &   1.3 & 2e-01 &  1876 &  24.9  \\
12000 &  2400 &   600 &   120 & 3e-08 &   103 &   2.3 & 7e-04 &  1894 &  38.3  \\
14000 &  2800 &   700 &   140 & 2e-08 &    97 &   2.8 & 2e-01 &  1799 &  49.0  \\
16000 &  3200 &   800 &   160 & 3e-08 &   104 &   4.0 & 5e-07 &  1866 &  65.4  \\
18000 &  3600 &   900 &   180 & 4e-08 &   107 &   5.4 & 5e-07 &  1896 &  85.7  \\
20000 &  4000 &  1000 &   200 & 3e-08 &   100 &   5.9 & 6e-04 &  1937 & 106.1  \\
22000 &  4400 &  1100 &   220 & 4e-08 &    93 &   6.3 & 5e-07 &  1847 & 122.4  \\
24000 &  4800 &  1200 &   240 & 2e-08 &    93 &   7.5 & 6e-07 &  1845 & 145.0  \\
26000 &  5200 &  1300 &   260 & 4e-08 &    92 &   8.8 & 6e-07 &  1863 & 172.5  \\
28000 &  5600 &  1400 &   280 & 5e-08 &    96 &  10.7 & 6e-07 &  1941 & 208.7  \\
30000 &  6000 &  1500 &   300 & 4e-08 &    91 &  11.5 & 6e-07 &  1828 & 223.2  \\
\hline
\end{tabular}
\label{table 6.1}
\end{table}

\section{Conclusion and future work}
In this paper, we considered the split feasibility problem, which is to find an element in the intersection of a closed set $ C $ and the linear preimage of another closed set $ D $. We reformulated this problem as an optimization problem with a DC objective, and applied the nonmonotone proximal gradient algorithm with majorization in  \cite[Appendix~A]{LiuPongTake2017} for solving it. We established global convergence and studied local convergence rate of the sequence generated by our algorithm, under mild assumptions. Our numerical experiments demonstrate that our algorithm performs well on solving split feasibility problems that arise from completely positive matrix factorization, sparse matrix factorization and outlier detection.

There are several avenues for future research. For instance, as suggested by one of the referees, a possible future research direction is to extend our approach in Section~\ref{sec7} to find sparse matrix factorization for {\em rectangular} matrices. In this case, the matrix to be factorized is not necessarily symmetric, and hence, one cannot apply \cite[Lemma~2.5]{GroeDur2018} to reformulate this factorization problem into a split feasibility problem involving the set of orthogonal matrices, as in Section~\ref{sec7}. However, note that one can prove the following analogue of \cite[Lemma~2.5]{GroeDur2018}:
\begin{lemma}
		If $X$, $\tilde X\in \R^{m\times r}$ and $Y$, $\tilde Y\in \R^{r\times n}$ are matrices of rank $ r $ and satisfy $XY = \tilde X\tilde Y$, then there exists an invertible matrix $Q\in \R^{r\times r}$ so that
		\[
		X = \tilde X Q \ \ {\rm and}\ \ Y = Q^{-1}\tilde Y.
		\]
\end{lemma}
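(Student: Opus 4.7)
The plan is to exploit uniqueness of rank factorization: once I show that $X$ and $\tilde X$ share the same column space (and dually that $Y,\tilde Y$ share the same row space), the matrix $Q$ will be forced upon me as a change-of-basis between two bases of the same $r$-dimensional subspace.

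First I would establish $\operatorname{col}(X) = \operatorname{col}(\tilde X)$. Since $Y\in\R^{r\times n}$ has rank $r$, the associated linear map $\R^n\to\R^r$ is surjective, so $\operatorname{col}(XY) = X(\operatorname{col}(Y)) = X(\R^r) = \operatorname{col}(X)$; the identical argument gives $\operatorname{col}(\tilde X\tilde Y) = \operatorname{col}(\tilde X)$. The hypothesis $XY = \tilde X\tilde Y$ then forces the two column spaces to coincide.

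Next I would extract $Q$. Because $\tilde X$ has full column rank $r$, each column of $X$ has a unique representation as a linear combination of the columns of $\tilde X$; collecting the coefficient vectors as the columns of a matrix $Q\in\R^{r\times r}$ yields $X = \tilde X Q$. Invertibility of $Q$ is then a simple rank count: $r = \operatorname{rank}(X) = \operatorname{rank}(\tilde X Q) = \operatorname{rank}(Q)$, since $\tilde X$ is full column rank. Finally, substituting $X = \tilde X Q$ into $XY = \tilde X\tilde Y$ gives $\tilde X(QY - \tilde Y) = 0$, and full column rank of $\tilde X$ (so that $\tilde X u = 0$ implies $u = 0$ columnwise) yields $\tilde Y = QY$, i.e., $Y = Q^{-1}\tilde Y$.

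No step here looks genuinely hard; the only place that might require a sentence of care is the column-space identification, where one has to notice that the full-row-rank assumption on $Y$ and $\tilde Y$ is exactly what is needed to pull the column space out of the product. Everything else is routine linear algebra that could equivalently be phrased using a left inverse $(\tilde X^T\tilde X)^{-1}\tilde X^T$ to write $Q = (\tilde X^T\tilde X)^{-1}\tilde X^T X$ explicitly, should an explicit formula be preferred.
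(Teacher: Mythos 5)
Your proof is correct and complete: the column-space identification via surjectivity of $Y$ and $\tilde Y$, the extraction of $Q$ from the full column rank of $\tilde X$, the rank count for invertibility, and the cancellation $\tilde X(QY-\tilde Y)=0$ are all valid, and the explicit formula $Q=(\tilde X^T\tilde X)^{-1}\tilde X^T X$ works since $\tilde X^T\tilde X$ is invertible. Note that the paper itself states this lemma without proof (it appears in the concluding section as an analogue of a cited result from Groetzner and D\"ur), so there is no argument in the paper to compare against; your write-up is the standard linear-algebra argument one would expect to fill that gap.
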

Thus, if a matrix $ G\in \mathbb{ R}^{m\times n}$ can be factorized as the product of two rank $ r $ matrices $A\in \R^{m\times r}$ and $B\in \R^{r\times n}$, other possible factorizations of $G$ into rank $ r $ matrices of the same sizes can be obtained by multiplying $A$ from the right by an invertible matrix $Q$, and multiplying $B$ from the left by $Q^{-1}$. Now, given an initial factorization $AB$ of $G$ with $A\in \R^{m\times r}$ and $B\in \R^{r\times n}$ having rank $ r $, we can reformulate the sparse factorization problem on $G$ as the following split feasibility problem:
\begin{equation}\label{eq:ssf}
\text{Find }  Q, P\in\R^{r\times r} \text{ s.t. }  (AQ, PB)\in D:=D_s^m\times D^n_s \text{ and } (Q,P)\in C,
\end{equation}
where
$C:=\left\{ (Q,P):\; QP = I\right\}$,
$$ D_s^m:=\left\{U\in\R^{m\times r} :\; \|u_i\| _0\leq s \text{ for each } i=1,\cdots,r
\right\}, $$
$$ D_s^n:=\left\{V\in\R^{r\times n} :\; \|v_j\| _0\leq s \text{ for each } j=1,\cdots,r
\right\}, $$
with $ u_i $, $ v_j $ being the $ i $th column and the $ j $th row of $ U$ and $V $, respectively. One difficulty in using this formulation is that the projection onto the set $C$ may not be easy to compute. Finding efficient ways to project onto $C$ is an interesting future research question.


\appendix

\section{Proof of Theorem~\ref{thm 4.1}}

\begin{proof}
  The boundedness of $\{x^t\}$ follows from Theorem~\ref{thm 3.1}(i). We now prove convergence of the whole sequence. By assumption, $x^*$ is an accumulation point of $\{x^t\}$ so that the function
  \[
  \kappa(u):= \frac{1}{2}d^2(u,D)
  \]
  is continuously differentiable at $Ax^*$ with locally Lipschitz gradient. Then we have from \cite[Example~8.53]{RockWets1998}, \cite[Theorem~1.110(ii)]{Mordukhovich2006} and the chain rule that
  \[
  \nabla(\kappa\circ A)(x^*) = A^T(Ax^* - {\rm Proj}_D(Ax^*)).
  \]
  Using this and the fact that $x^*$ is a stationary point of the split feasibility problem \eqref{eq 1.1} (see Theorem~\ref{thm 3.1}(ii)), we deduce further that
  \[
  \begin{split}
  0 &\in A^T(Ax^* - {\rm Proj}_D(Ax^*)) + N_C(x^*) \\
  & = \nabla(\kappa\circ A)(x^*) + N_C(x^*) = \partial F(x^*),
  \end{split}
  \]
  where the last equality follows from \cite[Exercise~8.8(c)]{RockWets1998}. In particular, it holds that $x^*\in {\rm dom}\partial F$.

  Since $F$ is a KL function and $x^*\in {\rm dom}\partial F$, there exist $\epsilon > 0$ and a continuous concave function $\psi$ as in Definition~\ref{def:KL} so that
  \begin{equation}\label{ineq:KL}
    \psi'(F(x) - F(x^*))\cdot d(0,\partial F(x))\ge 1
  \end{equation}
  whenever $\|x - x^*\|\le \epsilon$ and $F(x^*) < F(x) < F(x^*) + \epsilon$. Moreover, by shrinking $\epsilon$ if necessary, we may assume without loss of generality that $\nabla \kappa$ is globally Lipschitz in $\{Ax:\; x\in B(x^*,\epsilon)\}$ with Lipschitz modulus $\tau$.

  Next, observe from \eqref{eq 3.6} with $M = 0$ that $\{F(x^t)\}$ is nonincreasing. Since $F$ is also nonnegative, we deduce that the limit $\lim\limits_{t\to\infty}F(x^t)$ exists. In addition, notice that $F$ is continuous in its closed domain and $x^*$ is an accumulation point of $\{x^t\}$. Thus, we conclude that $\lim\limits_{t\to\infty}F(x^t) = F(x^*)$.

  Now, if $F(x^{t_0}) = F(x^*)$ for some $t_0 \ge 0$, then we see from \eqref{eq 3.6} with $M = 0$ and $\lim\limits_{t\to\infty}F(x^t) = F(x^*)$ that $x^{t+1}=x^t$ for all $t\ge t_0$, which implies that the sequence $\{x^t\}$ converges (finitely). Thus, from now on, we focus on the case that $F(x^t) > F(x^*)$ for all $t\ge 0$.

  In this case, note from Lemma~\ref{lemma 3.2} that there exists $N_0 > 1$ so that $\|x^{t}-x^{t-1}\|\le \frac{\epsilon}{2}$ whenever $t\ge N_0$. Also, using Lemma~\ref{lemma 3.2}, the definition of accumulation point and the fact that $\lim\limits_{t\to\infty}F(x^t) = F(x^*)$, there exists $N_1 \ge N_0$ so that
  \begin{enumerate}[{\rm (i)}]
  \item $\|x^{N_1}-x^*\| \le \frac{\epsilon}2$ and $F(x^*) < F(x^{N_1}) < F(x^*) + \epsilon$.
  \item $\|x^{N_1}-x^*\| + \|x^{N_1} - x^{N_1-1}\| + C_1\psi(F(x^{N_1})-F(x^*))\le \frac{\epsilon}{2}$,
  \end{enumerate}
  where $C_1 := \frac{2(\tau \lambda_{\max}(A^TA) + \beta)}{c}$, $c$ is as in \eqref{eq 3.6}, $\tau$ is the Lipschitz continuity modulus of $\nabla \kappa$ on $\{Ax:\; x\in B(x^*,\epsilon)\}$, $\beta = \sup_t \bar L_t$ with $\bar L_t$ defined in {\bf Step 2} of ${\sf SpFeas}_{{\rm DC}_{\sf ls}}$, and $\beta$ is finite according to Lemma~\ref{lemma 3.2}.

  We claim that if $t\ge N_1$ and $\|x^t - x^*\|\le \epsilon/2$, then
  \begin{equation}\label{key_rel}
    2\|x^{t+1}-x^t\|\le \|x^t-x^{t-1}\| +  C_1 \left[\psi(F(x^t) - F(x^*)) - \psi(F(x^{t+1}) - F(x^*))\right].
  \end{equation}
  To this end, note that since $x^t\in B(x^*,\epsilon/2)$ and $t\ge N_1\ge N_0$, we have $\|x^{t}-x^{t-1}\|\le \frac{\epsilon}{2}$ and hence $\|x^{t-1} - x^*\|\le \epsilon$. Thus, $\kappa$ is continuously differentiable at $Ax^{t-1}$ and $Ax^t$. Moreover, we see from \cite[Example~8.53]{RockWets1998} and \cite[Theorem~1.110(ii)]{Mordukhovich2006} (see also \eqref{tocite}) that $\nabla (\kappa\circ A)(x^{t-1}) = A^T(Ax^{t-1} - {\rm Proj}_D(Ax^{t-1}))$. Using this and the definition of $x^t$, we deduce that
\[
x^{t}\in {\rm Proj}_C\left(x^{t-1}-\frac{\nabla (\kappa\circ A)(x^{t-1})}{\bar{L}_{t-1}}\right).
\]
Thus, according to \eqref{normalcone},
\[
v^t:={\bar L}_{t-1}(x^{t-1}-x^t)-\nabla (\kappa\circ A)(x^{t-1})\in N_C(x^t).
\]
Moreover, using the definition of $v^t$, we have
\begin{equation}\begin{split}\label{eq 4.4}
\Vert v^t+\nabla (\kappa\circ A)(x^t)\Vert&\leq\Vert \nabla (\kappa\circ A)(x^t)-\nabla (\kappa\circ A)(x^{t-1})\Vert+{\bar L}_{t-1}\Vert x^{t}-x^{t-1}\Vert\\
                       &\leq(\tau \lambda_{\max}(A^TA) + \beta)\Vert x^{t}-x^{t-1}\Vert,
\end{split}\end{equation}
where the second inequality holds because $\beta = \sup_{t}\bar L_t$ and $\nabla \kappa$ is globally Lipschitz in $\{Ax:\;x\in B(x^*,\epsilon)\}$ with Lipschitz modulus $\tau$.
Since $v^t+\nabla (\kappa\circ A)(x^t)\in N_C(x^t) + \nabla (\kappa\circ A)(x^t) = \partial F(x^t)$, we obtain from \eqref{eq 4.4} that
  \[
  d(0,\partial F(x^t))\le \left(\tau \lambda_{\max}(A^TA) +\beta\right)\|x^t-x^{t-1}\|.
  \]
Making use of this, the concavity of $\psi$ and \eqref{eq 3.6} with $M = 0$, we see further that
  \[
  \begin{split}
    &\left(\tau \lambda_{\max}(A^TA) + \beta\right)\|x^t-x^{t-1}\|\cdot \left[\psi(F(x^t) - F(x^*)) - \psi(F(x^{t+1}) - F(x^*))\right]\\
    & \ge d(0,\partial F(x^t))\cdot \left[\psi(F(x^t) - F(x^*)) - \psi(F(x^{t+1}) - F(x^*))\right]\\
    & \ge d(0,\partial F(x^t))\cdot \psi'(F(x^t) - F(x^*))\cdot \left[F(x^t) - F(x^{t+1})\right]\\
    & \ge \frac{c}2\|x^{t+1}-x^t\|^2,
  \end{split}
  \]
  where the last inequality follows from \eqref{eq 3.6} with $M=0$, \eqref{ineq:KL}, and the facts that $\|x^t-x^*\|\le \epsilon/2$ and that $F(x^*) < F(x^t)\le F(x^{N_1}) < F(x^*)+\epsilon$ (since $t\ge N_1$). Dividing both sides of the above inequality by $\frac{c}2$, taking square root, using the relation $\sqrt{ab}\le \frac{a+b}{2}$ for any nonnegative numbers $a$ and $b$ and invoking the definition of $C_1$, we obtain further that
  \[
  \begin{split}
  \|x^{t+1}-x^t\|& \le \sqrt{\|x^t-x^{t-1}\|\cdot C_1\left[\psi(F(x^t) - F(x^*)) - \psi(F(x^{t+1}) - F(x^*))\right]}\\
  & \le \frac12\left(\|x^t-x^{t-1}\| +  C_1\left[\psi(F(x^t) - F(x^*)) - \psi(F(x^{t+1}) - F(x^*))\right]\right),
  \end{split}
  \]
  from which \eqref{key_rel} follows immediately.

  Next, we show by induction that $x^t\in B(x^*,\epsilon/2)$ whenever $t\ge N_1$. The case $t = N_1$ follows from construction. Suppose that $x^t\in B(x^*,\epsilon/2)$ whenever $t = N_1, \ldots, N_1+k-1$ for some $k\ge 1$. Then
  \[
  \begin{split}
    &\|x^{N_1+k} - x^*\|  \le \|x^{N_1}-x^*\| + \sum_{t=N_1}^{N_1+k-1}\|x^{t+1}-x^t\|\\
    & \overset{\rm (a)}\le \|x^{N_1}-x^*\| \\
    &\ \ + \sum_{t=N_1}^{N_1+k-1}\left(\|x^t-x^{t-1}\| - \|x^{t+1}-x^t\| +  C_1 \left[\psi(F(x^t) - F(x^*)) - \psi(F(x^{t+1}) - F(x^*))\right]\right)\\
    & \le \|x^{N_1}-x^*\| + \|x^{N_1}-x^{N_1-1}\| + C_1\psi(F(x^{N_1}) - F(x^*)) \overset{\rm (b)}\le \frac{\epsilon}{2},
  \end{split}
  \]
  where (a) follows from the induction hypothesis and \eqref{key_rel}, and (b) follows from the definition of $N_1$. Thus, $x^t\in B(x^*,\epsilon/2)$ whenever $t\ge N_1$ by induction.

  Since $x^t\in B(x^*,\epsilon/2)$ whenever $t\ge N_1$, we can sum both sides of \eqref{key_rel} from $N_1$ to $\infty$ and obtain
  \[
  \begin{split}
    &\sum_{t=N_1}^\infty\|x^{t+1}-x^t\|\\
    & \le \sum_{t=N_1}^\infty\bigg(\|x^t-x^{t-1}\| - \|x^{t+1}-x^t\| +  C_1 \left[\psi(F(x^t) - F(x^*)) - \psi(F(x^{t+1}) - F(x^*))\right]\bigg)\\
    & \le \|x^{N_1}-x^{N_1-1}\| +C_1\psi(F(x^{N_1}) - F(x^*)) < \infty.
  \end{split}
  \]
  Thus, the sequence $\{x^t\}$ is Cauchy and is hence convergent.
\qed\end{proof}



\end{document}